\numberwithin{equation}{section}
\newtheorem{theorem}{Theorem}[section]
\newtheorem{lemma}[theorem]{Lemma}
\newtheorem{proposition}[theorem]{Proposition}
\newtheorem{corollary}[theorem]{Corollary}
\theoremstyle{definition}
\newtheorem{definition}[theorem]{Definition}
\theoremstyle{remark}
\newtheorem{remark}[theorem]{Remark}
\numberwithin{equation}{section}
\DeclareSymbolFont{rsfs}{U}{rsfs}{m}{n}
\DeclareSymbolFontAlphabet{\mathscr}{rsfs}
\begin{document}

\title{Ground state solutions for Bessel fractional equations with irregular nonlinearities}

\author{Simone Secchi\thanks{Email address: \texttt{Simone.Secchi@unimib.it}}}
\affil{\small Dipartimento di Matematica e Applicazioni \\ Universit\`a degli Studi di Milano-Bicocca \\ via Roberto Cozzi 55, I-20125, Milano, Italy}

\maketitle

\begin{abstract}
We consider the semilinear fractional equation 
\( (I-\Delta)^s u = a(x) |u|^{p-2}u\) in \(\mathbb{R}^N\), where \(N \geq 3\), \(0<s<1\), \(2<p<2N/(N-2s)\) and \(a\) is a bounded weight function. Without assuming that \(a\) has an asymptotic profile at infinity, we prove the existence of a ground state solution.

\medskip

\textbf{Keywords and phrases:} Bessel fractional operator, fractional Laplacian

\textbf{2010 Mathematics Subject Classification:} 35KJ60, 35Q55, 35S05
\end{abstract}

\null
\begin{flushright}
\textit{Dedicated to Francesca}
\end{flushright}

\tableofcontents

\section{Introduction}

To pursue further the study that we began in \cite{Secchi17,Secchi17-1}, we consider in this paper the equation
\begin{equation} \label{eq:1.1} (I-\Delta)^s u = a(x) |u|^{p-2}u \quad
  \hbox{in \(\mathbb{R}^N\)},
\end{equation}
where \(a \in L^\infty(\mathbb{R}^N)\), \(N>2\), \(0<s<1\)  and $2<p<2_s^\star = 2N/(N-2s)$.

When \(s=1\), \eqref{eq:1.1} formally reduces to the semilinear elliptic equation
\begin{align*}
	-\Delta u + u = a(x)|u|^{p-2}u,
\end{align*}
which has been widely studied over the years. This equation can be seen as a particular case of the stationary Nonlinear Schr\"{o}dinger Equation
\begin{align}
	-\Delta u + V(x)u = a(x)|u|^{p-2}u \quad \hbox{in $\mathbb{R}^N$}. \label{eq:1.2}
\end{align}
When both $V$ and $a$ are constants, we refer to the seminal papers \cite{BerestyckiLionsI,BerestyckiLionsII} and to the references therein. Since the \emph{non-compact} group of translations acts on \(\mathbb{R}^N\), when $V$ and $a$ are general functions the analysis becomes subtler, and solutions exist according to some properties of these potentials. For instance, when both $V$ and $a$ are radially symmetric, \eqref{eq:1.2} is invariant under rotations, and it becomes legitimate to look for radially symmetric solutions: see \cite{DingNi}.

Without any \emph{a priori} symmetry assumption, the lack of compactness in \eqref{eq:1.2} must be overcome with a careful analysis, and the behavior of $V$ and $a$ at infinity plays a crucial r\^{o}le. The first attempt to solve \eqref{eq:1.2} in the case $\lim_{|x| \to +\infty} V(x)=+\infty$ and $a$ is a constant appeared in \cite{Rabinowitz}. With similar techniques, it is possible to solve \eqref{eq:1.2} under the  assumption $\limsup_{|x| \to +\infty} a(x) \leq 0$. So many papers dealing with \eqref{eq:1.2} (or with even more general equations) appeared in the literature afterwards that we refrain from any attempt to give a complete overview.

\bigskip

If $0<s<1$, our equation becomes \emph{non-local}, since the fractional power $(I-\Delta)^s$ of the positive operator $I-\Delta$ in  $L^2(\mathbb{R}^N)$ is no longer a differential operator. It is strictly related to the more popular \emph{fractional laplacian} $(-\Delta)^s$, but it behaves worse under scaling.  We offer a very quick review of this operator.

\bigskip

For $s>0$ we introduce the \emph{Bessel function space}
\[ 
L^{s,2}(\mathbb{R}^N) = \left\{ f \in L^2(\mathbb{R}^N)  \mid f=G_s \star g \
\text{for some $g \in L^2(\mathbb{R}^N)$} \right\},
\] 
where the Bessel convolution kernel is defined by
\begin{equation*} 
G_s (x) = \frac{1}{(4 \pi )^{s
/2}\Gamma(s/2)} \int_0^\infty \exp \left( -\frac{\pi}{t} |x|^2
\right) \exp \left( -\frac{t}{4\pi} \right) t^{\frac{s - N}{2}-1}
\, dt.
\end{equation*} 
The Bessel space is endowed with the norm~$\|f\| = \|g\|_2$ if
$f=G_s \star g$. The operator $(I-\Delta)^{-s} u = G_{2s}
\star u$ is usually called Bessel operator of order $s$.

In Fourier variables the same operator reads
\begin{equation*} 
G_s = \mathcal{F}^{-1} \circ
\left( \left(1+|\xi|^2 \right)^{-s /2} \circ \mathcal{F} \right),
\end{equation*} 
so that
\[ 
\|f\| = \left\| (I-\Delta)^{s /2} f \right\|_2.
\] 
For more detailed information, see \cite{Adams, Stein} and the
references therein.

In the paper \cite{Fall} the pointwise formula
\begin{equation*} 
(I-\Delta)^s u(x) = c_{N,s}
\operatorname{P.V.} \int_{\mathbb{R}^N}
\frac{u(x)-u(y)}{|x-y|^{\frac{N+2s}{2}}}
K_{\frac{N+2s}{2}}(|x-y|) \, dy + u(x)
\end{equation*} 
was derived for functions $u \in C_c^2(\mathbb{R}^N)$.
Here $c_{N,s}$ is a positive constant depending only on $N$ and
$s$, P.V. denotes the principal value of the singular integral,
and $K_\nu$ is the modified Bessel function of the second kind with
order $\nu$ (see \cite[Remark 7.3]{Fall} for more details).  However
a closed formula for $K_\nu$ is not known.

\smallskip

We summarize the main properties of Bessel spaces. For the proofs
we refer to \cite[Theorem 3.1]{Felmer}, \cite[Chapter V, Section 3]{Stein}.

\begin{theorem} \label{th:sobolev}
	\begin{enumerate}
		\item $L^{s,2}(\mathbb{R}^N) =
W^{s,2}(\mathbb{R}^N) = H^s (\mathbb{R}^N)$, where the sign of equality must be understood in the sense of an isomorphism.
		\item If $s \geq 0$ and $2 \leq q \leq
2_s^*=2N/(N-2s)$, then $L^{s,2}(\mathbb{R}^N)$ is
continuously embedded into $L^q(\mathbb{R}^N)$; if $2 \leq q <
2_s^*$ then the embedding is locally compact.
		\item Assume that $0 \leq s \leq 2$ and $s >
N/2$. If $s -N/2 >1$ and $0< \mu \leq s - N/2-1$, then
$L^{s,2}(\mathbb{R}^N)$ is continuously embedded into
$C^{1,\mu}(\mathbb{R}^N)$. If $s -N/2 <1$ and $0 < \mu \leq
s -N/2$, then $L^{s,2}(\mathbb{R}^N)$ is continuously
embedded into $C^{0,\mu}(\mathbb{R}^N)$.
	\end{enumerate}
\end{theorem}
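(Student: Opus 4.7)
The plan is to reduce all three assertions to Fourier-analytic computations, using Plancherel's identity together with the fact that the Bessel potential $G_s$ has Fourier symbol $(1+|\xi|^2)^{-s/2}$. Throughout I would argue first on the Schwartz class and then extend by density.

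For item (1), writing $f = G_s \star g$ with $g \in L^2$ is equivalent on the Fourier side to $\widehat{f}(\xi) = (1+|\xi|^2)^{-s/2}\widehat{g}(\xi)$, so the defining norm satisfies $\|g\|_2^2 = \int (1+|\xi|^2)^s |\widehat{f}(\xi)|^2\, d\xi$, giving an isometry $L^{s,2} \to H^s$. To identify $W^{s,2}$ with $H^s$ I would invoke the classical Gagliardo--Parseval identity $\iint |u(x)-u(y)|^2|x-y|^{-N-2s}\, dx\, dy = C_{N,s} \int |\xi|^{2s}|\widehat{u}(\xi)|^2\, d\xi$, combined with the elementary equivalence $\|u\|_2^2 + \int |\xi|^{2s}|\widehat{u}|^2\, d\xi \simeq \int (1+|\xi|^2)^s |\widehat{u}|^2\, d\xi$, yielding an equivalence of norms (not an isometry, which explains the word "isomorphism" in the statement).

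For item (2), the endpoint embedding $H^s \hookrightarrow L^{2_s^*}$ is the heart of the matter: I would write $u = G_s \star g$, dominate $G_s$ by the Riesz potential $I_s$ up to an integrable smoothing term, and then apply Hardy--Littlewood--Sobolev to $I_s \star g$; the intermediate range $2 \leq q \leq 2_s^*$ follows by interpolation with the trivial $L^2$ bound. For local compactness when $q < 2_s^*$, I would split the Fourier transform of a bounded sequence $u_n$ into low and high frequencies, apply the classical Rellich theorem to the smooth low-frequency truncation on a fixed ball, and control the high-frequency tail via the uniform $L^{2_s^*}$ bound together with $L^2$--$L^{2_s^*}$ interpolation; this is the standard fractional Rellich--Kondrachov argument.

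For item (3), the condition $s > N/2$ makes $(1+|\xi|^2)^{-s}$ integrable, so Cauchy--Schwarz in Fourier variables immediately gives the pointwise bound $\|u\|_\infty \lesssim \|u\|_{H^s}$. H\"older regularity of exponent $\mu \leq s - N/2$ would follow by splitting the Fourier integral representing $u(x)-u(y)$ at $|\xi| \sim 1/|x-y|$ and applying Cauchy--Schwarz on each piece; when $s - N/2 > 1$ the same reasoning applied to $\nabla u$ yields $C^{1,\mu}$ estimates. The hard part of the whole statement is item (2), specifically the endpoint Sobolev inequality, while the remaining estimates are routine Fourier analysis. Since complete proofs appear in \cite{Felmer,Stein}, as the author notes, I would ultimately cite those references rather than reproduce every detail.
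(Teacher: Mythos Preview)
Your proposal is correct, and in fact you go further than the paper does: the paper offers no proof at all for Theorem~\ref{th:sobolev}, merely citing \cite[Theorem 3.1]{Felmer} and \cite[Chapter V, Section 3]{Stein} before the statement. Your Fourier-analytic sketch --- Plancherel for the Bessel/$H^s$ identification, the Gagliardo--Parseval identity for $W^{s,2}=H^s$, Hardy--Littlewood--Sobolev plus interpolation for the embeddings, frequency splitting for local compactness and for the Morrey-type H\"older bounds --- is the standard route those references take, and the final sentence of your proposal lands exactly where the paper does. So there is no genuine difference of approach to compare; you simply supply the argument that the paper omits.
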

\begin{remark} 
	According to Theorem \ref{th:sobolev}, the Bessel space $L^{s,2}(\mathbb{R}^N)$
is topologically undistinguishable from the So\-bo\-lev fractional space
$H^s(\mathbb{R}^N)$. Since our equation
involves the Bessel norm, we will not exploit this characterization.
\end{remark}
Going back to \eqref{eq:1.1}, it must be said that in the case $s \in (0,1)$ less is known than in the \emph{local} case $s=1$. Equation~\eqref{eq:1.1} arises from the more general Schr\"{o}dinger-Klein-Gordon equation
\begin{align*}
	\mathrm{i}\frac{\partial \psi}{\partial t} = (I-\Delta)^s \psi -\psi -f(x,\psi)
\end{align*}
describing the the behaviour of bosons, spin-0 particles in relativistic fields. We refer to \cite{FelmerVergara, Secchi17,Secchi17-1,Secchi17-2} for very recent results about the existence of variational solutions.   When $s=1/2$, the operator $(I-\Delta)^{1/2}=\sqrt{\strut I-\Delta}$ is also called \emph{pseudorelativistic} or \emph{semirelativistic}, and it is very important in the study of several physical phenomena.
The interested reader can refer to \cite{CingolaniSecchi15,CingolaniSecchi18} and to the references therein for more information.
\begin{remark}
	The identity operator $I$ is often replaced by a multiple $m^2 I$, for some real number~$m \neq 0$. The operator reads then $(-\Delta+m^2)^s$, but for our purposes this generality does not give any advantage.
\end{remark}
A common feature in the current literature is that the existence of solutions to \eqref{eq:1.1} is related to the behavior of the potential function $a$ at infinity. This is a very useful tool for applying concentration-compactness methods or for working in weighted Lebesgue spaces. In the present paper, following \cite{AckerChagoya}, we investigate \eqref{eq:1.1} under much weaker assumptions on $a$, see Section 2. The first existence results for semilinear elliptic equations with \emph{irregular} potentials appeared, as far as we know, in \cite{CeramiMolle}.

\section{The variational setting}

We introduce some tools that will be used systematically in the rest
of the paper.

\begin{definition}
\begin{itemize}
\item For any $y \in \mathbb{R}^N$, we define the translation operator
  $\tau_y$ acting on a (suitably regular) function $f$ as
  $\tau_y f\colon x \mapsto f(x-y)$.
\item In a normed space $X$, we denote by $B(x,r)$ the ball centered
  at $x \in X$ with radius~$r >0$, and by $\overline{B}(x,r)$ its
  closure. The boundary of~$B(0,1)$ will be denoted by $S(X)$.
\item For any $a \in L^\infty(\mathbb{R}^N)$, we define
\begin{equation*}
\mathscr{P} = \overline{B}(0,|a|_\infty) \subset L^\infty(\mathbb{R}^N).
\end{equation*}
Looking at $L^\infty(\mathbb{R}^N)$ as the dual space of
$L^1(\mathbb{R}^N)$, the set $\mathscr{P}$ will be endowed with the
weak* topology. It is well-known that $\mathscr{P}$ becomes a compact
metrizable space, see \cite[Theorem 3.15 and Theorem 3.16]{Rudin}.
\item For any $a \in L^\infty(\mathbb{R}^N)$, we define the subset
  $\mathscr{A} = \left\{ \tau_y a \mid y \in \mathbb{R}^N \right\}$ of
  $\mathscr{P}$, endowed with the relative topology. Finally, we
  introduce
  $\mathscr{B} = \overline{\mathscr{A}} \setminus \mathscr{A}$.
\item For any $a \in L^\infty(\mathbb{R}^N)$, we define
\begin{equation} \label{eq:a-bar}
\bar{a} = \sup \left\{ \operatorname{ess\, sup} u \mid u \in \mathscr{B} \right\}.
\end{equation}
If $\mathscr{B}=\emptyset$, we agree that $\bar{a} = -\infty$.
\end{itemize}
\end{definition}
%1
The following is the main assumption of the present paper.
\begin{description}
\item[(A)] The function $a \in L^\infty(\mathbb{R}^N)$ is such that $a^{+} = \max\{a,0\}$ is not identically zero, and either (i) $\bar{a} \leq 0$ or (ii) $\bar{a} \leq a$. 
%or (iii) there exist $0<\gamma <2$ two sequences $\{z_n\}_n \subset \mathbb{R}^N$ and $\{R_n\}_n \subset \mathbb{R}^N$, and a non-negative function $\kappa \in L^1_{\mathrm{loc}}(\mathbb{R}^N) \setminus \{0\}$ such that $\lim_{n \to +\infty} R_n = +\infty$, $\lim_{n \to +\infty} \tau_{-z_n}a = \bar{a}$ in the weak* topology, and $a(x) \geq \bar{a} + \kappa (x-z_n) \mathrm{e}^{-\gamma R_n}$ for every $n \in \mathbb{N}$ and $x \in B(z_n,R_n)$.
\end{description}
Weak solutions to \eqref{eq:1.1} are critical points of the functional $I_a \colon L^{s,2}(\mathbb{R}^N) \to \mathbb{R}^N$ defined by
\begin{equation*}
I_a(u) = \frac{1}{2} \| u \|_{L^{s,2}}^2 - \frac{1}{p} \int_{\mathbb{R}^N} a |u|^p.
\end{equation*}
\begin{definition}
A solution $u \in L^{s,2}(\mathbb{R}^N)$ is called a ground-state solution to \eqref{eq:1.1} if $I_a$ attains at $u$ the infimum over the set of all solutions to \eqref{eq:1.1}, namely
\begin{equation*}
I_a(u) = \min \left\{ I_a(v) \mid \hbox{$v \in L^{s,2}(\mathbb{R}^N)$ solves \eqref{eq:1.1}} \right\}.
\end{equation*}
\end{definition}
We now state the main result of our paper.
\begin{theorem} \label{th:main}
Equation \eqref{eq:1.1} has (at least) a positive ground state provided that $2<p<2_s^\star$ and $a \in L^\infty(\mathbb{R}^N)$ satisfies \textbf{(A)}.
\end{theorem}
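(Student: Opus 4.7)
My plan is to work on the Nehari manifold associated to $I_a$, namely
\[
\mathcal{N}_a=\bigl\{u\in L^{s,2}(\mathbb{R}^N)\setminus\{0\} \;\bigm|\; \|u\|_{L^{s,2}}^2=\textstyle\int_{\mathbb{R}^N}a|u|^p\bigr\},
\]
which is nonempty because $a^+\not\equiv 0$ allows one to rescale any nonnegative test function with $\int a|u|^p>0$ onto $\mathcal{N}_a$. On $\mathcal{N}_a$ one has $I_a(u)=(\tfrac12-\tfrac1p)\|u\|_{L^{s,2}}^2$, and using $|a|_\infty$ together with the embedding $L^{s,2}(\mathbb{R}^N)\hookrightarrow L^p(\mathbb{R}^N)$ from Theorem~\ref{th:sobolev} one obtains $\|u\|_{L^{s,2}}\ge\delta>0$ on $\mathcal{N}_a$, whence the ground-state level $c_a:=\inf_{\mathcal{N}_a}I_a$ is strictly positive. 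I would then pick a minimizing sequence $(u_n)\subset\mathcal{N}_a$ which, by Ekeland's variational principle, can be chosen to be a Palais--Smale sequence for $I_a$ restricted to $\mathcal{N}_a$, and hence a $(\mathrm{PS})_{c_a}$-sequence for $I_a$; it is automatically bounded in $L^{s,2}(\mathbb{R}^N)$.

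The main obstacle is, of course, the lack of compactness caused by translation invariance of the norm $\|\cdot\|_{L^{s,2}}$. To bypass it I would apply a Lions-type concentration lemma to the sequence $(u_n)$: either $u_n\to 0$ in $L^p_{\mathrm{loc}}$ (vanishing), which is impossible because $\|u_n\|_{L^{s,2}}$ stays bounded away from $0$ and $u_n\in\mathcal{N}_a$; or there exist $y_n\in\mathbb{R}^N$ and $R>0$ such that $\int_{B(y_n,R)}|u_n|^p\ge\eta>0$. Set $\tilde u_n:=\tau_{-y_n}u_n$, so that $\tilde u_n\rightharpoonup \tilde u\neq 0$ weakly in $L^{s,2}(\mathbb{R}^N)$. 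Since $\|\cdot\|_{L^{s,2}}$ is translation invariant, the functional rewrites as $I_a(u_n)=I_{\tau_{-y_n}a}(\tilde u_n)$. The sequence $\tau_{-y_n}a$ lies in the weak-$*$ compact metrizable set $\mathscr{P}$, so up to a subsequence $\tau_{-y_n}a\overset{*}{\rightharpoonup}a_\infty\in\overline{\mathscr{A}}$. Passing to the weak limit in the equation (the linear operator $(I-\Delta)^s$ is weakly continuous, and the nonlinearity passes to the limit via the local compactness of the embedding $L^{s,2}\hookrightarrow L^p_{\mathrm{loc}}$ together with weak-$*$ convergence of $\tau_{-y_n}a$ tested against the bounded sequence $|\tilde u_n|^{p-2}\tilde u_n\varphi$), one sees that $\tilde u\neq 0$ solves
\[
(I-\Delta)^s\tilde u = a_\infty(x)|\tilde u|^{p-2}\tilde u.
\]

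Now I would split into two cases depending on whether $(y_n)$ is bounded. If it is, then $a_\infty\in\mathscr{A}$, say $a_\infty=\tau_{-y_0}a$, and $\tau_{y_0}\tilde u$ is a nontrivial solution of \eqref{eq:1.1}. Moreover, since $I_a(\tau_{y_0}\tilde u)\le\liminf I_a(u_n)=c_a$ by weak lower semicontinuity of the norm together with Brezis--Lieb on the nonlinear term (using $\tau_{-y_n}a\to\tau_{-y_0}a$), one obtains a minimizer and hence a ground state. If instead $|y_n|\to\infty$, then $a_\infty\in\mathscr{B}$ and therefore $\operatorname{ess\,sup}a_\infty\le\bar a$. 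Under alternative (i), $\bar a\le 0$ forces $a_\infty\le 0$ a.e., so $\|\tilde u\|_{L^{s,2}}^2=\int a_\infty|\tilde u|^p\le 0$, contradicting $\tilde u\neq 0$. Under alternative (ii), $a_\infty\le\bar a\le a$ a.e., which by a standard Nehari comparison (if $t>0$ is chosen so that $t\tilde u\in\mathcal{N}_a$, then $t\le 1$ and
\[
c_a\le I_a(t\tilde u)=\bigl(\tfrac12-\tfrac1p\bigr)t^2\|\tilde u\|_{L^{s,2}}^2\le \bigl(\tfrac12-\tfrac1p\bigr)\|\tilde u\|_{L^{s,2}}^2\le\liminf I_a(u_n)=c_a,
\]
forcing equality) shows that $\tilde u$ itself achieves $c_a$ on $\mathcal{N}_{a_\infty}$ and, after the comparison, yields a ground state of $I_a$.

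The hardest step is the one in the middle: correctly passing to the limit in the nonlinear term $\int (\tau_{-y_n}a)|\tilde u_n|^{p-2}\tilde u_n\varphi$ using \emph{only} weak-$*$ convergence of the potential together with strong local $L^p$-convergence of $\tilde u_n$, and then ruling out concentration loss of mass at infinity through the alternatives (i)--(ii); this is where assumption \textbf{(A)} is used in an essential way. Once a nonnegative minimizer is obtained (by replacing $u$ with $|u|$, which belongs to $\mathcal{N}_a$ with the same energy because $a^+\not\equiv 0$ can be arranged to carry the mass), standard regularity for the Bessel equation together with a strong maximum principle for $(I-\Delta)^s$ (already used, e.g., in \cite{Fall}) upgrades $u\ge 0$ to $u>0$, completing the proof.
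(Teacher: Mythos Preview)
Your argument is correct and, in fact, takes a more economical route than the paper's own proof. The paper proceeds by establishing a full iterated profile decomposition (its Proposition~4.8): from the Palais--Smale sequence it extracts finitely many limiting potentials $a^i\in\overline{\mathscr A}$ and nontrivial critical points $u^i$ of $I_{a^i}$ with $c_a\ge\sum_i I_{a^i}(u^i)$. It then rules out $a^i\in\mathscr B$ by invoking the \emph{strict} inequality $c_a<c_{\bar a}$ (Proposition~5.3), whose proof in case~(ii) of \textbf{(A)} requires an external input, namely the existence of a ground state for the constant-coefficient problem $I_{\bar a}$, quoted from \cite{Ambrosio}. This forces each $a^i$ to be a translate of $a$, whence $k=1$.

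You bypass both the full decomposition and the appeal to \cite{Ambrosio}. Extracting a single nonzero weak limit $\tilde u$ of $\tau_{-y_n}u_n$ suffices because on $\mathcal N_a$ one has $I_a(u)=(\tfrac12-\tfrac1p)\|u\|_{L^{s,2}}^2$, which is weakly lower semicontinuous; this alone yields $(\tfrac12-\tfrac1p)\|\tilde u\|^2\le c_a$. In the unbounded case under (ii) you then only need the pointwise comparison $a_\infty\le\bar a\le a$ to project $\tilde u$ onto $\mathcal N_a$ with factor $t\le 1$, and the chain $c_a\le I_a(t\tilde u)\le(\tfrac12-\tfrac1p)\|\tilde u\|^2\le c_a$ closes immediately, giving a minimizer on $\mathcal N_a$ and hence (by the natural-constraint property) a ground state. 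This is genuinely simpler: no iterated splitting, no strict level comparison $c_a<c_{\bar a}$, no need for the autonomous ground state.

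Two cosmetic remarks. First, in the bounded-$(y_n)$ case the Brezis--Lieb step on the nonlinear term is superfluous; weak lower semicontinuity of the norm already gives $I_a(\tau_{y_0}\tilde u)\le c_a$ since $\tau_{y_0}\tilde u\in\mathcal N_a$. Second, your sentence ``$\tilde u$ itself achieves $c_a$ on $\mathcal N_{a_\infty}$'' in case~(ii) should read ``$t\tilde u$ achieves $c_a$ on $\mathcal N_a$''; that is what your chain of inequalities actually proves, and it is exactly what is needed.
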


\section{The construction of a Nehari manifold}

We introduce the Nehari set of $I_a$ as
\begin{equation*}
  \mathscr{N}_a = \left\{ u \in L^{s,2}(\mathbb{R}^N) \mid u \neq 0, \ DI_a(u)[u]=0 \right\}.
\end{equation*}
\begin{definition}
  $c_a = \inf_{u \in \mathscr{N}_a} I_a(u)$. We agree that
  $c_a = +\infty$ if $\mathscr{N}_a = \emptyset$.
\end{definition}
To proceed further, we need a ``dual'' characterization of the essential supremum.
\begin{lemma} \label{lem:3.1}
Let \(a \in L^\infty(\mathbb{R}^N)\). There results
\begin{equation} \label{eq:3.1}
\operatorname{ess\, sup} a = \sup \left\{ \int_{\mathbb{R}^N} a \varphi \mid \varphi \in L^1(\mathbb{R}^N), \ \varphi \geq 0, \ \int_{\mathbb{R}^N} \varphi =1 \right\}.
\end{equation}
\end{lemma}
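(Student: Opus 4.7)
The plan is to prove both inequalities in \eqref{eq:3.1}. Write $M = \operatorname{ess\, sup} a$ and denote by $S$ the supremum on the right-hand side of \eqref{eq:3.1}; since $a \in L^\infty(\mathbb{R}^N)$, $M$ is a finite real number.

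For the easy direction $S \leq M$, I would rely only on the pointwise a.e.\ bound $a \leq M$. Given any admissible $\varphi \geq 0$ with $\int_{\mathbb{R}^N}\varphi = 1$, multiplying by $\varphi$ and integrating yields $\int_{\mathbb{R}^N} a\varphi \leq M \int_{\mathbb{R}^N}\varphi = M$; taking the supremum over $\varphi$ gives $S \leq M$.

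For the reverse direction $M \leq S$, the strategy is to construct a family of almost-extremal test functions. Fix $\varepsilon > 0$. By the very definition of essential supremum, the superlevel set $A_\varepsilon = \{x \in \mathbb{R}^N : a(x) > M - \varepsilon\}$ has strictly positive Lebesgue measure. Because $A_\varepsilon = \bigcup_{n \in \mathbb{N}}(A_\varepsilon \cap B(0,n))$, some $B_\varepsilon := A_\varepsilon \cap B(0,n_\varepsilon)$ has positive \emph{finite} measure. Then $\varphi_\varepsilon := |B_\varepsilon|^{-1}\chi_{B_\varepsilon}$ lies in $L^1(\mathbb{R}^N)$, is nonnegative, integrates to $1$, and satisfies
\[
\int_{\mathbb{R}^N} a \varphi_\varepsilon = \frac{1}{|B_\varepsilon|}\int_{B_\varepsilon} a \geq M - \varepsilon,
\]
so $S \geq M - \varepsilon$. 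Letting $\varepsilon \to 0^+$ yields $S \geq M$ and concludes the proof.

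The entire argument is a routine duality computation; the only subtle point is the truncation by $B(0,n_\varepsilon)$ to guarantee that the candidate $\varphi_\varepsilon$ actually lies in $L^1(\mathbb{R}^N)$, which would fail if $A_\varepsilon$ had infinite measure and one naively normalized its characteristic function. No other compactness or regularity issue arises since $a$ is only assumed to be essentially bounded.
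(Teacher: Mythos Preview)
Your proof is correct and follows essentially the same approach as the paper's: the easy inequality is identical, and the reverse inequality is obtained in both cases by testing against the normalized characteristic function of a superlevel set of $a$. Your version is in fact slightly more careful, since you truncate the superlevel set by a ball to guarantee finite measure before normalizing, whereas the paper simply writes $\varphi = \chi_\Omega / \mathcal{L}^N(\Omega)$ without addressing the possibility that $\mathcal{L}^N(\Omega)=\infty$.
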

\begin{proof}
Whenever \(\varphi \in L^1(\mathbb{R}^N)\), \(\varphi \geq 0\), \( \int_{\mathbb{R}^N} \varphi =1\), we compute
\begin{equation*}
\int_{\mathbb{R}^N} a \varphi \leq \operatorname{ess\, sup} a \int_{\mathbb{R}^N} \varphi = \operatorname{ess\, sup} a.
\end{equation*}
Hence
\begin{equation} \label{eq:3.2}
\operatorname{ess\, sup} a \geq \sup \left\{ \int_{\mathbb{R}^N} a \varphi \mid \varphi \in L^1(\mathbb{R}^N), \ \varphi \geq 0, \ \int_{\mathbb{R}^N} \varphi =1 \right\}.
\end{equation}
On the other hand, if we set
\begin{equation*}
\sup \left\{ \int_{\mathbb{R}^N} a \varphi \mid \varphi \in L^1(\mathbb{R}^N), \ \varphi \geq 0, \ \int_{\mathbb{R}^N} \varphi =1 \right\} = b
\end{equation*}
and we assume that \(\operatorname{ess\, sup} a >  b\),
then for some \(\delta>0\) we can say  that the set \(\Omega = \left\{ x \in \mathbb{R}^N \mid a(x) \geq b + \delta \right\}\) has positive measure. Let us define \(\varphi = \chi_\Omega / \mathcal{L}^N(\Omega)\), so that 
\begin{equation*}
\int_{\mathbb{R}^N} a \varphi = \frac{1}{\mathcal{L}^N(\Omega)} \int_\Omega a \geq b+\delta,
\end{equation*}
contrary to \eqref{eq:3.2}. This completes the proof.
\end{proof}
Recall from assumption \textbf{(A)} that $a^{+} \neq 0$ as an element
of $L^\infty(\mathbb{R}^N)$. Therefore Lemma \ref{lem:3.1} yields a function
$\varphi \in S(L^1(\mathbb{R}^N))$ such that $\varphi \geq 0$ and
$\int_{\mathbb{R}^N} a \varphi >0$. By a standard mollification
argument, we can assume without loss of generality that
$\varphi \in C_c^\infty(\mathbb{R}^N)$.

\bigskip

Since $L^{s,2}(\mathbb{R}^N)$ is continuously embedded into
$L^p(\mathbb{R}^N)$ for every $2<p<2_s^\star$, we can set
\begin{equation*}
  S_p = \sup \left\{ \frac{|u|_p}{\|u\|_{L^{s,2}}}  \mid u \in L^{s,2}(\mathbb{R}^N), \ u \neq 0  \right\} \in (0,+\infty).
\end{equation*}
We write
\begin{equation*}
  \mathscr{B}_a^{+}= \left\{ u \in L^{s,2}(\mathbb{R}^N) \mid \int_{\mathbb{R}^N} a |u|^p >0  \right\}
\end{equation*}
and
\begin{equation*}
  \mathscr{S}_a^{+} = \mathscr{B}_a^{+} \cap S(L^{s,2}(\mathbb{R}^N)).
\end{equation*}
\begin{lemma}
  The set $\mathscr{B}_a^{+}$ is non-empty and open in
  $L^{s,2}(\mathbb{R}^N)$.
\end{lemma}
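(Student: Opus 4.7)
My plan is to deduce both claims from continuity of the superposition functional
\[
J\colon L^{s,2}(\mathbb{R}^N) \to \mathbb{R}, \qquad J(u) = \int_{\mathbb{R}^N} a |u|^p.
\]
Once $J$ is shown to be continuous, openness of $\mathscr{B}_a^{+} = J^{-1}((0,+\infty))$ is immediate.

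To prove continuity of $J$, I will use Theorem~\ref{th:sobolev}(2) to promote $L^{s,2}$-convergence to $L^p$-convergence (the hypothesis $2<p<2_s^\star$ is precisely what is needed), and then the pointwise inequality $\bigl||u|^p-|v|^p\bigr| \leq p\bigl(|u|^{p-1}+|v|^{p-1}\bigr)|u-v|$ combined with H\"older's inequality to obtain
\[
|J(u_n)-J(u)| \leq p\,|a|_\infty \bigl(|u_n|_p^{p-1}+|u|_p^{p-1}\bigr)|u_n-u|_p,
\]
which tends to zero whenever $u_n \to u$ in $L^{s,2}(\mathbb{R}^N)$.

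For non-emptiness, I plan to reuse the function $\varphi \in C_c^\infty(\mathbb{R}^N)$ with $\varphi \geq 0$ and $\int_{\mathbb{R}^N} a\varphi>0$ already produced in the paragraph following Lemma~\ref{lem:3.1}. The natural candidate $\psi = \varphi^{1/p}$ is continuous and compactly supported, hence belongs to $L^p(\mathbb{R}^N)$, and satisfies $\int a|\psi|^p = \int a\varphi > 0$. Since $\psi$ is only H\"older of order $2/p$ near the zeros of $\varphi$, it is not clear that $\psi \in L^{s,2}$ when $s \geq 2/p$; I sidestep this by approximating $\psi$ in $L^p$ by a sequence $u_n \in C_c^\infty(\mathbb{R}^N) \subset L^{s,2}(\mathbb{R}^N)$, using density of smooth compactly supported functions in $L^p$. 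The same H\"older estimate above (now with $a \in L^\infty$ acting on the $L^p$-approximation) gives $J(u_n) \to \int a\varphi > 0$, so $u_n \in \mathscr{B}_a^{+}$ for all sufficiently large~$n$.

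The main organisational point is keeping track of which norm governs which step: I use $L^{s,2}$-convergence only to derive $L^p$-convergence via the embedding, and perform the approximation argument for non-emptiness entirely within $L^p$, while ensuring that the approximants themselves happen to live in $L^{s,2}$. Beyond this bookkeeping, the proof demands only an elementary pointwise estimate on $|u|^p-|v|^p$ and the embedding provided by Theorem~\ref{th:sobolev}, so I do not anticipate a genuine obstacle.
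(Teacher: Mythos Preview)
Your proof is correct and follows the same overall strategy as the paper: establish continuity of $J(u)=\int_{\mathbb{R}^N} a|u|^p$ on $L^{s,2}(\mathbb{R}^N)$ to get openness, and exhibit an explicit element for non-emptiness. The paper dispatches openness in one line by invoking the same continuity, and for non-emptiness simply asserts that the function $\varphi\in C_c^\infty(\mathbb{R}^N)$ constructed after Lemma~\ref{lem:3.1} already lies in $\mathscr{B}_a^{+}$.

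Your treatment of non-emptiness is in fact more careful than the paper's. The paper only secures $\int_{\mathbb{R}^N} a\varphi>0$, not $\int_{\mathbb{R}^N} a\varphi^p>0$, so the claim ``$\varphi\in\mathscr{B}_a^{+}$'' is not literally justified by what is written. Your detour through $\psi=\varphi^{1/p}$, followed by approximation of $\psi$ in $L^p$ by $C_c^\infty$ functions (which automatically lie in $L^{s,2}$), cleanly closes this gap: you get $J(u_n)\to\int a\varphi>0$ from the same H\"older estimate used for continuity, without ever needing $\psi$ itself to belong to $L^{s,2}$. This is a minor but genuine improvement in rigor over the paper's argument.
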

\begin{proof}
  We already know that $\varphi \in \mathscr{B}_a^{+}$. Furthermore,
  the map $u \mapsto \int_{\mathbb{R}^N} a|u|^p$ is continuous from
  $L^{s,2}(\mathbb{R}^N)$ to $\mathbb{R}$, since
  $a \in L^\infty(\mathbb{R}^N)$ and $2<p<2^\star$. This immediately
  implies that $\mathscr{B}_a^{+}$ is an open subset of
  $L^{s,2}(\mathbb{R}^N)$.
\end{proof}
\begin{lemma}
  There exists a homeomorphism $\mathscr{S}_a^{+} \to \mathscr{N}_a$
  whose inverse map is $u \mapsto u/\|u\|_{L^{s,2}}$.
\end{lemma}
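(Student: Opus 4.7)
The plan is to use the standard Nehari manifold trick: for each $u \in \mathscr{S}_a^{+}$, find the unique positive scalar $t(u)$ such that $t(u) u \in \mathscr{N}_a$, and use the map $\Phi \colon u \mapsto t(u) u$. For $v = tu$ with $t>0$ and $\|u\|_{L^{s,2}}=1$, the Nehari condition $DI_a(v)[v]=0$ reads $t^2 = t^p \int_{\mathbb{R}^N} a|u|^p$, so the unique positive solution is
\begin{equation*}
  t(u) = \left(\int_{\mathbb{R}^N} a|u|^p \right)^{-1/(p-2)},
\end{equation*}
which is well defined precisely because $u \in \mathscr{S}_a^{+}$ forces $\int a|u|^p > 0$.

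Next I would verify that $\Phi$ and the candidate inverse $\Psi \colon v \mapsto v/\|v\|_{L^{s,2}}$ are mutually inverse. For $u \in \mathscr{S}_a^{+}$, $\Phi(u) = t(u)u$ has norm $t(u)>0$, so $\Psi(\Phi(u))=u$. Conversely, if $v \in \mathscr{N}_a$ then $v \neq 0$ and $\|v\|_{L^{s,2}}^2 = \int a|v|^p > 0$, which means $\Psi(v) \in \mathscr{S}_a^{+}$; a direct computation using the homogeneity of the integrand gives $t(\Psi(v)) = \|v\|_{L^{s,2}}$, so $\Phi(\Psi(v))=v$. This simultaneously shows that $\Phi$ lands in $\mathscr{N}_a$ (uniqueness of the Nehari scalar makes the image lie in $\mathscr{N}_a$ by construction anyway) and that $\Psi$ lands in $\mathscr{S}_a^{+}$.

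For continuity, the map $u \mapsto \int_{\mathbb{R}^N} a|u|^p$ is continuous on $L^{s,2}(\mathbb{R}^N)$ by the preceding lemma (or directly from $a \in L^\infty$ plus the continuous embedding $L^{s,2} \hookrightarrow L^p$), and it is strictly positive on $\mathscr{S}_a^{+}$, so the composition with $x \mapsto x^{-1/(p-2)}$ shows $u \mapsto t(u)$ is continuous; hence $\Phi$ is continuous on $\mathscr{S}_a^{+}$. Continuity of $\Psi$ on $\mathscr{N}_a$ follows from the fact that the norm is continuous and bounded below on $\mathscr{N}_a$: indeed, for $v \in \mathscr{N}_a$ the identity $\|v\|_{L^{s,2}}^2 = \int a|v|^p$ combined with $|\int a|v|^p| \leq |a|_\infty S_p^p \|v\|_{L^{s,2}}^p$ yields $\|v\|_{L^{s,2}}^{p-2} \geq (|a|_\infty S_p^p)^{-1}$, so the division by the norm is uniformly controlled.

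There isn't really a hard step here; the only mild obstacle is making sure the factor $t(u)$ does not degenerate, which is exactly prevented by restricting to the open set $\mathscr{S}_a^{+}$ on the sphere and by the uniform lower bound on $\|v\|_{L^{s,2}}$ for $v \in \mathscr{N}_a$ derived above. Together these bounds show that $\Phi$ and $\Psi$ are continuous inverses of one another, which is the claimed homeomorphism.
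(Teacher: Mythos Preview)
Your proof is correct and follows essentially the same fibering-map approach as the paper: both identify the unique positive scalar $t(u)$ for which $t(u)u\in\mathscr{N}_a$, obtain the explicit formula $t(u)=\bigl(\int_{\mathbb{R}^N}a|u|^p\bigr)^{-1/(p-2)}$ on $\mathscr{S}_a^{+}$, and deduce continuity of the map from the continuity of $u\mapsto\int_{\mathbb{R}^N}a|u|^p$. If anything, your write-up is more explicit than the paper's, which leaves the verification that the two maps are mutual inverses and the continuity of the inverse to the reader; your inclusion of the uniform lower bound $\|v\|_{L^{s,2}}^{p-2}\geq (|a|_\infty S_p^p)^{-1}$ on $\mathscr{N}_a$ (the same estimate the paper records as \eqref{eq:3.3} in the following lemma) to justify continuity of $\Psi$ is a nice touch.
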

\begin{proof}
  For any $u \in L^{s,2}(\mathbb{R}^N) \setminus \{0\}$ we consider the
  \emph{fibering map}
	\begin{equation*}
	h(t) = I_a(tu), \qquad (t \geq 0).
	\end{equation*}
	It follows easily that $h$ has a positive critical point if,
        and only if, $u \in \mathscr{B}_a^{+}$. It is a Calculus
        exercise to check that, in this case, the critical point of
        $h$ is the unique non-degenerate global maximum $\bar{t}(u)>0$
        of $h$. By direct computation, $ t u \in \mathscr{N}_a$ if,
        and only if, $t=\bar{t}(u)$. Explicitly,
	\begin{equation*}
	\bar{t}(u) = \frac{\|u\|_{L^{s,2}}^2}{\int_{\mathbb{R}^N} a|u|^p}.
	\end{equation*}
	This shows that the map $u \mapsto \bar{t}(u)$ is continuous from $\mathscr{B}_a^{+}$ to $(0,+\infty)$. The rest of the proof follows easily.
\end{proof}
\begin{lemma}
	The set $\mathscr{N}_a$ is closed in $L^{s,2}(\mathbb{R}^N)$.
\end{lemma}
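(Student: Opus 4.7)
The plan is to verify sequential closedness: take an arbitrary sequence $(u_n)_n \subset \mathscr{N}_a$ converging to some $u$ in $L^{s,2}(\mathbb{R}^N)$, and show that $u \in \mathscr{N}_a$, i.e.\ that $u \neq 0$ and $DI_a(u)[u] = 0$. The definition of $\mathscr{N}_a$ can be rewritten as the pair of conditions $u \neq 0$ and $\|u\|_{L^{s,2}}^2 = \int_{\mathbb{R}^N} a |u|^p$, so the task naturally splits into passing these two conditions to the limit.

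For the Nehari identity, the map $v \mapsto \|v\|_{L^{s,2}}^2$ is continuous by definition of the norm, and the map $v \mapsto \int_{\mathbb{R}^N} a|v|^p$ is continuous on $L^{s,2}(\mathbb{R}^N)$ because $a \in L^\infty(\mathbb{R}^N)$ and $L^{s,2}(\mathbb{R}^N)$ embeds continuously into $L^p(\mathbb{R}^N)$ (as already used in the proof that $\mathscr{B}_a^{+}$ is open). Passing to the limit in $\|u_n\|_{L^{s,2}}^2 = \int_{\mathbb{R}^N} a|u_n|^p$ therefore gives $\|u\|_{L^{s,2}}^2 = \int_{\mathbb{R}^N} a|u|^p$, which is exactly $DI_a(u)[u] = 0$.

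The main point is to rule out the degenerate case $u = 0$. For each $n$ one has, using the embedding constant $S_p$ and the fact that $u_n \in \mathscr{N}_a$,
\begin{equation*}
\|u_n\|_{L^{s,2}}^2 \;=\; \int_{\mathbb{R}^N} a|u_n|^p \;\leq\; |a|_\infty\, S_p^p\, \|u_n\|_{L^{s,2}}^p.
\end{equation*}
Since $u_n \neq 0$ and $p > 2$, dividing by $\|u_n\|_{L^{s,2}}^2$ yields the uniform lower bound $\|u_n\|_{L^{s,2}} \geq \bigl(|a|_\infty S_p^p\bigr)^{-1/(p-2)} > 0$. Continuity of the norm then propagates this lower bound to the limit, so $\|u\|_{L^{s,2}} > 0$ and in particular $u \neq 0$; combined with the Nehari identity established above, this gives $u \in \mathscr{N}_a$.

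I do not expect any serious obstacle: the only delicate ingredient is the uniform lower bound on Nehari elements, which relies precisely on $p > 2$ and on the Sobolev-type embedding encoded in the constant $S_p$.
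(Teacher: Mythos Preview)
Your proof is correct and follows essentially the same approach as the paper: both establish the uniform lower bound on $\|u\|_{L^{s,2}}$ for $u \in \mathscr{N}_a$ via the Sobolev embedding and the condition $p>2$, and use it to exclude $0$ as a limit point. You are somewhat more explicit than the paper in spelling out the continuity step that passes the Nehari identity to the limit, and you use $|a|_\infty$ where the paper uses $|a^{+}|_\infty$, but these are cosmetic differences.
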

\begin{proof}
	If $u \in \mathscr{N}_a$, then
	\begin{equation*}
	\|u\|_{L^{s,2}}^2 = \int_{\mathbb{R}^N} a |u|^p \leq \int_{\mathbb{R}^N} a^{+} |u|^p \leq S_p |a^{+}|_\infty \|u\|_{L^{s,2}}^p. 
	\end{equation*}
	It follows that
	\begin{equation} \label{eq:3.3}
	\inf_{u \in \mathscr{N}_a} \|u\|_{L^{s,2}} \geq \frac{1}{S_p |a^{+}|_\infty^{1/(p-2)}}.
	\end{equation}
	As a consequence, $0$ is not a cluster point of $\mathscr{N}_a$, which turns out to be closed.
\end{proof}
It is now standard to invoke the Implicit Function Theorem to prove that $\mathscr{N}_a$ is a $C^2$-submanifold of $L^{s,2}(\mathbb{R}^N)$ and that
\eqref{eq:3.3} implies 
\begin{equation*}
\inf_{u \in \mathscr{N}_a} I_a(u) \geq \left( \frac{1}{2} - \frac{1}{p} \right) \frac{1}{S_p^2 |a^{+}|_\infty^{2/(p-2)}}.
\end{equation*}
More importantly, $\mathscr{N}_a$ is a \emph{natural constraint} for $I_a$, i.e. every critical point of the restriction $\bar{I}_a$ of $I_a$ to $\mathscr{N}_a$ is a nontrivial critical point of $I_a$.
The following result was proved in \cite[Proposition 3.2]{FelmerVergara}, and allows us to consider only positive ground states.
\begin{proposition}
  Any weak solution to \eqref{eq:1.1} is strictly positive.
\end{proposition}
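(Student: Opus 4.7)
The plan is to work in two stages: first upgrade the regularity of the weak solution so that pointwise arguments make sense, and then exploit the positivity of the Bessel Green kernel $G_{2s}$. For the regularity stage I would start from a weak solution $u \in L^{s,2}(\mathbb{R}^N)$ and rewrite the equation in integral form as $u = G_{2s} * \bigl( a(x)|u|^{p-2}u \bigr)$. Using Theorem \ref{th:sobolev}, the subcritical growth $p<2_s^\star$, and the integrability of the kernel, a standard Moser/bootstrap iteration (tested on $u\,|u|^{2(\beta-1)}$ truncated) promotes $u$ to $L^q(\mathbb{R}^N)$ for every finite $q$, then to $L^\infty(\mathbb{R}^N)$, after which Hölder regularity for Bessel convolutions makes $u$ continuous and decaying at infinity.

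Next I would establish the sign of $u$. The key tool is a Kato-type inequality for the Bessel form, namely
\[
\langle u, u^{-} \rangle_{L^{s,2}} \leq -\|u^{-}\|_{L^{s,2}}^2 \qquad \text{for every } u \in L^{s,2}(\mathbb{R}^N),
\]
which follows from the pointwise identity $(u^{+}(x)-u^{+}(y))(u^{-}(x)-u^{-}(y))\leq 0$ applied to the singular-integral representation of $(-\Delta)^s$ combined with $\int u^{+} u^{-}=0$. Testing the equation against $u^{-}\in L^{s,2}(\mathbb{R}^N)$ and using that on $\{u<0\}$ one has $|u|^{p-2}u\,u^{-} = -(u^{-})^p$ gives
\[
\|u^{-}\|_{L^{s,2}}^2 \leq \int_{\mathbb{R}^N} a(x)(u^{-})^p \,dx \leq |a^{+}|_\infty S_p^p \|u^{-}\|_{L^{s,2}}^p .
\]
Either $u^{-}\equiv 0$, or $\|u^{-}\|_{L^{s,2}}^{p-2} \geq 1/(|a^{+}|_\infty S_p^p)$; the latter alternative is excluded by the Felmer--Vergara argument in \cite{FelmerVergara}, which exploits the decay at infinity obtained in the first stage together with a concentration/maximum-principle estimate on the support of $u^{-}$.

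Once $u \geq 0$ and $u \not\equiv 0$, strict positivity follows from the representation $u(x) = \int_{\mathbb{R}^N} G_{2s}(x-y)\, a(y) u(y)^{p-1}\,dy$, the pointwise positivity $G_{2s}(z)>0$ for every $z \neq 0$, and the assumption $a^{+}\not\equiv 0$: the convolution of $G_{2s}$ against the nonnegative, nontrivial function $a^{+} u^{p-1}$ is strictly positive everywhere, while the contribution of $a^{-}u^{p-1}$ is controlled at any potential zero of $u$ by a strong minimum principle for $(I-\Delta)^s$. The main obstacle is the sign step under assumption \textbf{(A)}: when $a$ is allowed to be sign-changing the Kato/Sobolev estimate alone leaves room for a large-norm negative part, and ruling it out is where the bulk of the work (drawn from \cite{FelmerVergara}) is concentrated; once nonnegativity is in hand, passage to strict positivity is essentially automatic from the positivity of $G_{2s}$.
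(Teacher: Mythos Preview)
The paper does not actually give a proof of this proposition: it simply states that the result was proved in \cite[Proposition~3.2]{FelmerVergara}.  There is therefore no argument in the paper to compare your proposal against.

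That said, your proposal contains a genuine gap, and in fact the statement as literally written cannot be proved.  The nonlinearity $u\mapsto a(x)|u|^{p-2}u$ is odd, so if $u$ is a weak solution of \eqref{eq:1.1} then so is $-u$.  Hence it is impossible that \emph{every} weak solution be strictly positive.  Your dichotomy
\[
u^{-}\equiv 0\quad\text{or}\quad \|u^{-}\|_{L^{s,2}}^{p-2}\geq \frac{1}{|a^{+}|_\infty S_p^{p}}
\]
is correct, but the second alternative genuinely occurs (take any negative solution), and no amount of regularity/decay information from \cite{FelmerVergara} can exclude it.  The sentence ``the latter alternative is excluded by the Felmer--Vergara argument'' is precisely where the proof breaks down.

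What the paper needs, and what \cite{FelmerVergara} presumably supplies, is a strong maximum principle: any \emph{nonnegative}, nontrivial weak solution of \eqref{eq:1.1} is strictly positive.  Combined with the observation that $|u|\in\mathscr{N}_a$ whenever $u\in\mathscr{N}_a$ and $I_a(|u|)=I_a(u)$, this is enough to produce a positive ground state, which is the only use the paper makes of the proposition.  Your final paragraph (strict positivity from $u\geq 0$, $u\not\equiv 0$, via the positivity of the Bessel kernel $G_{2s}$) is the right idea for that step; the sign step for arbitrary solutions, however, is both unprovable and unnecessary.
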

\begin{proposition} \label{prop:3.7}
Let $\bar{I}_a$ be the restriction of the functional $I_a$ to the manifold $\mathscr{N}_a$. Every Palais-Smale sequence at level $c$ for $\bar{I}_a$ is also a
Palais-Smale sequence at level $c$ for $I_a$.
\end{proposition}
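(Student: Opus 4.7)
The plan is to run the standard argument showing that a non-degenerate $C^2$ constraint is a natural constraint: use Lagrange multipliers via a transverse splitting of $L^{s,2}(\mathbb{R}^N)$ into the tangent space of $\mathscr{N}_a$ and the line spanned by $u_n$ itself. Write $G(u) = DI_a(u)[u] = \|u\|_{L^{s,2}}^2 - \int_{\mathbb{R}^N} a|u|^p$, so that $\mathscr{N}_a = G^{-1}(0) \setminus \{0\}$. A direct differentiation gives $DG(u)[u] = 2\|u\|_{L^{s,2}}^2 - p \int_{\mathbb{R}^N} a|u|^p$, which on $\mathscr{N}_a$ equals $(2-p)\|u\|_{L^{s,2}}^2$; thus $u$ is transverse to $T_u \mathscr{N}_a = \ker DG(u)$, and the direct-sum decomposition $L^{s,2}(\mathbb{R}^N) = T_{u_n}\mathscr{N}_a \oplus \mathbb{R} u_n$ is valid at every $u_n \in \mathscr{N}_a$.

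The level assumption $\bar{I}_a(u_n) \to c$ combined with $u_n \in \mathscr{N}_a$ yields $I_a(u_n) = (\tfrac12 - \tfrac1p)\|u_n\|_{L^{s,2}}^2 \to c$, so $\|u_n\|_{L^{s,2}}$ is bounded above, and \eqref{eq:3.3} bounds it away from zero. Next, for an arbitrary $w \in L^{s,2}(\mathbb{R}^N)$, decompose $w = v_w + t_w u_n$ with $v_w \in T_{u_n}\mathscr{N}_a$, where
\begin{equation*}
t_w = \frac{DG(u_n)[w]}{(2-p)\|u_n\|_{L^{s,2}}^2}.
\end{equation*}
Using $|DG(u_n)[w]| \leq 2\|u_n\|_{L^{s,2}}\|w\|_{L^{s,2}} + p |a|_\infty S_p^p \|u_n\|_{L^{s,2}}^{p-1}\|w\|_{L^{s,2}}$ together with the two-sided bounds on $\|u_n\|_{L^{s,2}}$, I would obtain a constant $C>0$, independent of $n$, such that $|t_w| \leq C\|w\|_{L^{s,2}}$ and hence $\|v_w\|_{L^{s,2}} \leq C'\|w\|_{L^{s,2}}$.

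To conclude, I would use the Nehari identity $DI_a(u_n)[u_n]=0$: it implies
\begin{equation*}
DI_a(u_n)[w] = DI_a(u_n)[v_w] + t_w\, DI_a(u_n)[u_n] = DI_a(u_n)[v_w].
\end{equation*}
Since $v_w$ lies in the tangent space, the Palais--Smale hypothesis $\|D\bar{I}_a(u_n)\|_{T_{u_n}^*\mathscr{N}_a} \to 0$ gives $|DI_a(u_n)[v_w]| \leq \|D\bar{I}_a(u_n)\|_* \|v_w\|_{L^{s,2}} \leq C'\|D\bar{I}_a(u_n)\|_* \|w\|_{L^{s,2}}$, so $\|DI_a(u_n)\|_{(L^{s,2})^*} \to 0$, as required.

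The only genuinely delicate point is making the transverse-projection constants $C,C'$ uniform in $n$: this is exactly where one uses both the a priori lower bound \eqref{eq:3.3} (to keep $\|u_n\|_{L^{s,2}}^{-1}$ bounded) and the upper bound coming from $I_a(u_n) \to c$ (to control the $\|u_n\|_{L^{s,2}}^{p-1}$ factor in $DG$). Everything else is a routine verification that $\mathscr{N}_a$ is a $C^2$ submanifold (already mentioned in the text before the statement) and an identification of $D\bar{I}_a(u_n)$ with the restriction of $DI_a(u_n)$ to the tangent space.
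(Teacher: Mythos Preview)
Your argument is correct, but it follows a different route from the paper's proof. The paper works with the \emph{orthogonal} decomposition $L^{s,2}(\mathbb{R}^N)=T_{u_n}\mathscr{N}_a\oplus\mathbb{R}\,\nabla\psi(u_n)$ (where $\psi$ is your $G$), writes $\nabla\bar{I}_a(u_n)$ explicitly as the orthogonal projection of $\nabla I_a(u_n)$ onto $T_{u_n}\mathscr{N}_a$, and then uses a Pythagorean-theorem inequality to bound $\|\nabla I_a(u_n)\|$ below by a constant times $\|\nabla\bar{I}_a(u_n)\|$. You instead use the \emph{non-orthogonal} splitting $T_{u_n}\mathscr{N}_a\oplus\mathbb{R}\,u_n$, which is transverse because $DG(u_n)[u_n]=(2-p)\|u_n\|^2\neq 0$; the payoff is that the Nehari identity $DI_a(u_n)[u_n]=0$ kills the complementary component of $DI_a(u_n)[w]$ for free, so no geometric inequality is needed. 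Your approach is the more elementary one and makes the r\^ole of the two-sided bound on $\|u_n\|_{L^{s,2}}$ very transparent; the paper's approach is more Riemannian in flavour and would generalise more directly to constraints where the radial direction is not so convenient. Both rely on the same quantitative inputs: the boundedness of $\{u_n\}$ from $I_a(u_n)\to c$ and the lower bound \eqref{eq:3.3}.
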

\begin{proof}
Assume that $\{u_n\}_n \subset \mathscr{N}_a$ is a Palais-Smale sequence
at level $c$ for $\bar{I}_a$, namely
\begin{align*}
  \lim_{n \to +\infty} \bar{I}_a(u_n) =c
\end{align*}
and
\begin{align*}
  \lim_{n \to +\infty} D\bar{I}_a(u_n) =0
\end{align*}
in the norm topology. It suffices to show
that the sequence $\{ \nabla I_a(u_n)\}_n$ converges to zero in
$L^{s,2}(\mathbb{R}^N)$. Let us abbreviate $\psi(u) = D I_a(u)[u]$, so that
$\mathscr{N}_a = \psi^{-1}(\{0\}) \setminus \{0\}$. From the fact that
$u_n \in \mathscr{N}_a$, we deduce that $I_a(u_n) =
(1/2-1/p)\|u_n\|_{L^{s,2}}^2$, and hence the sequence $\{u_n\}_n$ is
bounded. This implies that
\begin{equation} \label{eq:3.4}
\sup_n \frac{\left\| \nabla \psi (u_n) \right\|_{L^{s,2}}}{\|u_n\|_{L^{s,2}}}
< +\infty.
\end{equation}
Explicitly, we have that, for every $n \in \mathbb{N}$,
\begin{equation} \label{eq:3.5}
\langle \nabla \psi(u_n)\mid u_n \rangle = (2-p) \|u_n\|_{L^{s,2}}^2 <0
\end{equation}
and
\begin{equation} \label{eq:3.6}
  \nabla \bar{I}_a(u_n) = \nabla I_a(u_n) - \frac{\langle \nabla
    I_a(u_n)\mid \nabla \psi (u_n) \rangle}{\|\nabla
    \psi(u_n)\|_{L^{s,2}}^2} \nabla \psi(u_n).
\end{equation}
Observe that $\nabla I_a(u_n) \perp u_n$ because $u_n \in
\mathscr{N}_a$. If we consider the quantity
\begin{equation*}
  \left\| \nabla \psi (u_n) \right\|_{L^{s,2}}^2 - \left( \frac{\langle
      \nabla I_a(u_n)\mid \nabla \psi(u_n) \rangle}{\|\nabla I_a(u_n)\|_{L^{s,2}}^2}
    \right)^2,
  \end{equation*}
  we immediately see that it equals the square of the norm of the
  projection of the vector $\nabla \psi(u_n)$ onto the subspace of
  $L^{s,2}(\mathbb{R}^N)$ orthogonal to the unit vector
  $\nabla I_a(u_n)/\|\nabla I_a(u_n)\|$. Since this subspace contains
  in particular the vector $u_n/\|u_n\|_{L^{s,2}}$, it follows from the Pythagorean
  Theorem that
  \begin{equation}
 \left\| \nabla \psi (u_n) \right\|_{L^{s,2}}^2 - \left( \frac{\langle
      \nabla I_a(u_n)\mid \nabla \psi(u_n) \rangle}{\|\nabla I_a(u_n)\|_{L^{s,2}}^2}
    \right)^2 \geq \left( \frac{\langle \nabla \psi(u_n)\mid u_n
        \rangle}{\|u_n\|_{L^{s,2}}} \right)^2.
  \end{equation}
  This yields, recalling \eqref{eq:3.6}, \eqref{eq:3.5} and \eqref{eq:3.4},
  \begin{align*}
\left\| \nabla \bar{I}_a(u_n) \right\|_{L^{s,2}} \left\| \nabla I_a(u_n)
\right\|_{L^{s,2}} &\geq \langle \nabla \bar{I}_a(u_n)\mid \nabla I_a(u_n)
\rangle \\
&= \frac{\|\nabla I_a(u_n)\|_{L^{s,2}}^2}{\|\nabla \psi(u_n)\|_{L^{s,2}}^2}
\left( \left\| \nabla \psi(u_n) \right\|_{L^{s,2}}^2 - \left(
    \frac{\langle \nabla I_a(u_n)\mid \nabla
      \psi(u_n)\rangle}{\|\nabla I_a(u_n)\|_{L^{s,2}}^2}
    \right)^2
  \right)^2 \\
  &\geq \frac{\|\nabla I_a(u_n)\|_{L^{s,2}}^2}{\|\nabla
    \psi(u_n)\|_{L^{s,2}}^2} \left( \frac{\langle \nabla \psi(u_n)\mid u_n
        \rangle}{\|u_n\|_{L^{s,2}}} \right)^2 \\
        &= \frac{\|\nabla I_a(u_n)\|_{L^{s,2}}^2}{\|\nabla
      \psi(u_n)\|_{L^{s,2}}^2} (2-p)^2 \|u_n\|_{L^{s,2}}^2 \\
    &\geq C \|\nabla I_a(u_n)\|_{L^{s,2}}^2.
  \end{align*}
  This argument proves that
  $\lim_{n \to +\infty} \|\nabla I_a(u_n)\|_{L^{s,2}}=0$, and we conclude.
\end{proof}

\section{Splitting and vanishing sequences}

The analysis of Palais-Smale sequences can be harder than in the more
familiar case of a potential function $a$ that has a precise
asymptotic behavior at infinity. For this reason, we recall a language
taken from \cite{AckerChagoya}.
\begin{definition}
  A map $F \colon X \to Y$ between two Banach spaces splits in the BL
  sense\footnote{BL stands for Brezis and Lieb.} if for any sequence
  $\{u_n\}_n \subset X$ such that $u_n \rightharpoonup u$ in $X$ there
  results
  \begin{equation*}
    F(u_n-u) = F(u_n)-F(u)+o(1)
  \end{equation*}
  in the norm topology of~$Y$.
\end{definition}
\begin{lemma} \label{lem:4.2}
  Suppose that $\{u_n\}_n \subset L^{s,2}(\mathbb{R}^N)$ and
  $\{y_n\}_n \subset \mathbb{R}^N$ are such that
  $\tau_{-y_n}u_n \rightharpoonup u_0$ in $L^{s,2}(\mathbb{R}^N)$. Then
  \begin{equation*}
    I_{\tau_{-y_n}a}(\tau_{-y_n}u_n) - I_{\tau_{-y_n}a} (\tau_{-y_n}u_n-u_0)-I_{\tau_{-y_n}a}(u_0)=o(1)
  \end{equation*}
  and
  \begin{equation*}
    DI_{\tau_{-y_n}a}(\tau_{-y_n}u_n) - DI_{\tau_{-y_n}a} (\tau_{-y_n}u_n-u_0) - DI_{\tau_{-y_n}a}(u_0) =o(1).
    \end{equation*}
  \end{lemma}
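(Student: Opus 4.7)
Set $v_n := \tau_{-y_n} u_n$ and $a_n := \tau_{-y_n} a$, so the hypothesis reads $v_n \rightharpoonup u_0$ in $L^{s,2}(\mathbb{R}^N)$; in particular $\{v_n\}$ is bounded. By the local compactness of the embedding $L^{s,2}(\mathbb{R}^N) \hookrightarrow L^p_{\mathrm{loc}}(\mathbb{R}^N)$ from Theorem~\ref{th:sobolev} and a standard diagonal extraction, I may assume $v_n \to u_0$ almost everywhere on $\mathbb{R}^N$. The key observation that lets the argument survive the lack of any asymptotic profile of $a$ is that $|a_n|_\infty = |a|_\infty$ uniformly in $n$, even though $\{a_n\}$ need not converge in any reasonable topology.

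The plan is to split $I_{a_n}$ and its gradient separately into the Hilbert-norm and the Nemytskii pieces. The quadratic piece is immediate: in a Hilbert space, the polarization identity gives
\[
\|v_n\|_{L^{s,2}}^2 = \|v_n - u_0\|_{L^{s,2}}^2 + \|u_0\|_{L^{s,2}}^2 + 2\langle v_n - u_0, u_0\rangle_{L^{s,2}},
\]
with the last term $o(1)$ by weak convergence, while the analogous identity read at the derivative level holds with no residual at all. For the nonlinear piece, the strong form of the Brezis--Lieb lemma, applied to the sequence $\{v_n\}$ which is bounded in $L^p$ and converges a.e.\ to $u_0$, yields
\[
\bigl\| \,|v_n|^p - |v_n - u_0|^p - |u_0|^p \,\bigr\|_1 \to 0;
\]
multiplying by $a_n$ and bounding $|a_n|\le |a|_\infty$ produces the first assertion of the lemma. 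For the gradient I would argue in the dual: testing against $\phi \in L^{s,2}(\mathbb{R}^N)$ with $\|\phi\|_{L^{s,2}} \le 1$ and invoking $|\phi|_p \le S_p$ reduces the matter to proving
\[
\bigl\| \,|v_n|^{p-2} v_n - |v_n - u_0|^{p-2}(v_n - u_0) - |u_0|^{p-2} u_0\, \bigr\|_{p/(p-1)} \to 0.
\]

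The only step that is not merely bookkeeping is this last $L^{p/(p-1)}$-convergence of the residual for the nonlinearity $t \mapsto |t|^{p-2} t$. My preferred route is the pointwise Brezis--Lieb inequality
\[
\bigl|\,|A+B|^{p-2}(A+B) - |A|^{p-2} A - |B|^{p-2} B\,\bigr| \le \varepsilon |A|^{p-1} + C_\varepsilon |B|^{p-1},
\]
valid for every $\varepsilon > 0$ with $C_\varepsilon$ independent of $A, B \in \mathbb{R}$, applied with $A = v_n - u_0$ and $B = u_0$, together with Vitali's convergence theorem: the uniform $L^p$-bound on $\{v_n\}$ and on $u_0$ provides the equi-integrability and tightness of the dominating integrand, while the pointwise vanishing of the left-hand side follows from the a.e.\ convergence $v_n \to u_0$. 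The translated weight $a_n$ enters only through the estimate $|a_n| \le |a|_\infty$, so the absence of an asymptotic profile of $a$ costs nothing in the splitting.
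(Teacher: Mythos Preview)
Your argument is correct and follows essentially the same route as the paper's proof: split the functional and its derivative into the quadratic part (handled by weak convergence in the Hilbert space) and the Nemytskii part, apply the strong Brezis--Lieb splitting for $|u|^p$ in $L^1$ and for $|u|^{p-2}u$ in $L^{p/(p-1)}$, and then absorb the translated weight via the uniform bound $|a_n|_\infty = |a|_\infty$. The only difference is cosmetic: the paper outsources the two BL-splitting statements to a cited lemma, whereas you sketch the proof of the $L^{p/(p-1)}$ splitting via the pointwise $\varepsilon$-inequality and a dominated/Vitali argument.
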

  \begin{proof}
Since the maps $F(u)=p^{-1}|u|^p$ and $F'(u)=|u|^{p-2}u$ both split from
$L^{s,2}(\mathbb{R}^N)$ into $L^1(\mathbb{R}^N)$, see \cite[Lemma
4.4]{Secchi17}, we can write
\begin{multline*}
  \int_{\mathbb{R}^N} \left| \left( \tau_{-y_n} a \right) \left( F( \tau_{-y_n}u_n) - F(\tau_{-y_n}u_n-u_0)-F(u_0) \right) \right| \\
  \leq |a|_\infty \int_{\mathbb{R}^N} \left| F( \tau_{-y_n}u_n) -
    F(\tau_{-y_n}u_n-u_0)-F(u_0) \right| =o(1)
\end{multline*}
and
\begin{multline*}
  \int_{\mathbb{R}^N} \left| \left( \tau_{-y_n} a \right) \left( F'(\tau_{-y_n} u_n) - F'(\tau_{-y_n} u_n-u_0) - F'(u_0) \right) \right|^{p/(p-1)} \\
  \leq |a|_\infty^{p/(p-1)} \int_{\mathbb{R}^N} \left| F'(\tau_{-y_n}
    u_n) - F'(\tau_{-y_n} u_n-u_0) - F'(u_0) \right|^{p/(p-1)}.
\end{multline*}
Recalling that the squared norm splits in the BL sense, the proof is
complete.
\end{proof}
\begin{definition} \label{def:4.3}
  A sequence $\{u_n\}_n \subset L^{s,2}(\mathbb{R}^N)$ vanishes if
  $\tau_{x_n} u_n \rightharpoonup 0$ in $L^{s,2}(\mathbb{R}^N)$ for any
  sequence $\{x_n\}_n$ of points in $\mathbb{R}^N$.
\end{definition}
\begin{remark} \label{rem:4.4}
  Any vanishing sequence is necessarily bounded in
  $L^{s,2}(\mathbb{R}^N)$, and by the Rellich-Kondratchev theorem (see
  \cite[Corollary 7.2]{DiNezza})
  $\tau_{x_n} u_n \to 0$ strongly in $L_{\mathrm{loc}}^2(\mathbb{R}^N)$ for every
  sequence $\{x_n\}_n \subset \mathbb{R}^N$. This yields that, for
  every $R>0$,
  \begin{equation*}
    \lim_{n \to +\infty} \sup \left\{
      \int_{B(x,R)} |u_n|^2 \mid x \in \mathbb{R}^N
      \right\} =0.
  \end{equation*}
  By the fractional version of Lions' vanishing lemma
  \cite[Proposition II.4]{Secchi12}, we deduce that $u_n \to 0$
  strongly in $L^q(\mathbb{R}^N)$ for every $2<q<2_s^\star$.
\end{remark}
\begin{definition}
  If $\{u_n\}_n$ is a sequence from $L^{s,2}(\mathbb{R}^N)$, we say that
  $\{DI_a(u_n)\}_n$ *-vanishes if
  $D I_{\tau_{x_n}a} (u_n) \rightharpoonup^\star 0$ in the weak*
  topology for every sequence $\{x_n\}_n \subset \mathbb{R}^N$.
\end{definition}
\begin{remark}
  It follows from the definition of the gradient and from the
  definition of the weak* topology that $\{DI_a(u_n)\}_n$ *-vanishes
  if, and only if, $\{\nabla I_a(u_n)\}_n$ vanishes in
  $L^{s,2}(\mathbb{R}^N)$ in the sense of Definition \ref{def:4.3}.
\end{remark}
\begin{lemma} \label{lem:4.7}
  Suppose that $\{u_n\}_n \subset L^{s,2}(\mathbb{R}^N)$, $\{y_n\}_n
  \subset \mathbb{R}^N$ and $a^* \in L^\infty(\mathbb{R}^N)$ are such
  that $\{DI_a(u_n)\}_n$ *-vanishes, $\tau_{-y_n} u_n \rightharpoonup
  u_0$ weakly in $L^{s,2}(\mathbb{R}^N)$ and $\tau_{-y_n}a
  \rightharpoonup^* a^*$ weakly*. If $v_n = u_n - \tau_{y_n} u_0$,
  then
  \begin{align}
    \lim_{n \to +\infty} \left( I_a(u_n) - I_a(v_n) \right) &= I_{a^*}
                                                              (u_0) \label{eq:4.3}
    \\
    \lim_{n \to +\infty} \left( \|u_n\|_{L^{s,2}}^2 - \|v_n\|_{L^{s,2}}^2
    \right) &= \|u_0\|_{L^{s,2}}^2 \label{eq:4.4} \\
    DI_{a^*}(u_0) &= 0. \label{eq:4.5}
  \end{align}
  Furthermore, also $\{DI_a (v_n)\}_n$ *-vanishes.
\end{lemma}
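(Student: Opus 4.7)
The plan is to derive all four claims from Lemma~\ref{lem:4.2}, the translation invariance of the Bessel norm, the weak* convergence $\tau_{-y_n}a \rightharpoonup^\star a^*$, and the equivalent reformulation of *-vanishing noted in the remark following Definition~\ref{def:4.3}, namely $DI_a(u_n)[\tau_{z_n}\phi] \to 0$ for every $\phi \in L^{s,2}(\mathbb{R}^N)$ and every $\{z_n\}\subset\mathbb{R}^N$. The identities $\|\tau_y f\|_{L^{s,2}} = \|f\|_{L^{s,2}}$ and $\int a\,|\tau_y f|^p = \int \tau_{-y}a\,|f|^p$ combine into $I_a(\tau_y f) = I_{\tau_{-y}a}(f)$; applied to $\tau_{-y_n}u_n$ and to $\tau_{-y_n}u_n - u_0 = \tau_{-y_n}v_n$, the energy splitting of Lemma~\ref{lem:4.2} becomes
\begin{equation*}
I_a(u_n) - I_a(v_n) - I_{\tau_{-y_n}a}(u_0) = o(1),
\end{equation*}
and the weak* convergence of $\tau_{-y_n}a$ tested against $|u_0|^p \in L^1(\mathbb{R}^N)$ yields $I_{\tau_{-y_n}a}(u_0) \to I_{a^*}(u_0)$, proving \eqref{eq:4.3}. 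Identity \eqref{eq:4.4} is the Hilbertian Brezis-Lieb expansion of the squared Bessel norm applied to $\tau_{-y_n}u_n = u_0 + (\tau_{-y_n}u_n - u_0)$: the cross term vanishes by weak convergence, and translation invariance of the norm finishes.

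For \eqref{eq:4.5}, I would test *-vanishing at translates by $y_n$: for $\phi \in C_c^\infty(\mathbb{R}^N)$, a change of variables gives
\begin{equation*}
DI_a(u_n)[\tau_{y_n}\phi] = \langle \tau_{-y_n}u_n, \phi\rangle_{L^{s,2}} - \int \tau_{-y_n}a\,|\tau_{-y_n}u_n|^{p-2}\tau_{-y_n}u_n\,\phi,
\end{equation*}
and the left-hand side tends to $0$. On the compact support of $\phi$, the local compactness of $L^{s,2}(\mathbb{R}^N)\hookrightarrow L^p$ forces $|\tau_{-y_n}u_n|^{p-2}\tau_{-y_n}u_n\,\phi \to |u_0|^{p-2}u_0\,\phi$ strongly in $L^1$; paired with $\tau_{-y_n}a \rightharpoonup^\star a^*$ in $L^\infty$, this identifies the limit as $DI_{a^*}(u_0)[\phi]$, which is therefore $0$. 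Density of $C_c^\infty(\mathbb{R}^N)$ extends the conclusion to every test function.

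The main obstacle is the final assertion, since *-vanishing must be verified along \emph{every} translation sequence whereas Lemma~\ref{lem:4.2} only provides splittings centered at $y_n$. The plan is to combine the Brezis-Lieb splitting of $F'(u) = |u|^{p-2}u$ in $L^{p/(p-1)}(\mathbb{R}^N)$ (also furnished by Lemma~\ref{lem:4.2}) with the isometry $\tau_{y_n}$ of $L^{p/(p-1)}(\mathbb{R}^N)$ to get
\begin{equation*}
|v_n|^{p-2}v_n = |u_n|^{p-2}u_n - |\tau_{y_n}u_0|^{p-2}\tau_{y_n}u_0 + o(1) \qquad \text{in } L^{p/(p-1)}(\mathbb{R}^N),
\end{equation*}
whence, for arbitrary $\phi \in L^{s,2}(\mathbb{R}^N)$ and arbitrary $\{z_n\}$,
\begin{equation*}
DI_a(v_n)[\tau_{z_n}\phi] = DI_a(u_n)[\tau_{z_n}\phi] - DI_{\tau_{-y_n}a}(u_0)[\tau_{z_n-y_n}\phi] + o(1).
\end{equation*}
The first summand vanishes by hypothesis. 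For the second I would split on whether $\zeta_n = z_n - y_n$ stays bounded or drifts to infinity. In the bounded case, after extracting $\zeta_n \to \zeta$, strong continuity of translation on $L^{s,2}(\mathbb{R}^N)$ gives $\tau_{\zeta_n}\phi \to \tau_\zeta\phi$ strongly, so the weak*-strong argument used for \eqref{eq:4.5} produces the limit $DI_{a^*}(u_0)[\tau_\zeta\phi]$, which is $0$ by \eqref{eq:4.5}. In the unbounded case, $\langle u_0, \tau_{\zeta_n}\phi\rangle_{L^{s,2}} \to 0$ trivially since $u_0$ is fixed, and for $\phi \in C_c^\infty(\mathbb{R}^N)$ the support of $\tau_{\zeta_n}\phi$ drifts to infinity, so a H\"older estimate exploiting the tightness of $|u_0|^{p-2}u_0 \in L^{p/(p-1)}(\mathbb{R}^N)$ shows that $\int \tau_{-y_n}a\,|u_0|^{p-2}u_0\,\tau_{\zeta_n}\phi \to 0$; density in $L^{s,2}(\mathbb{R}^N)$ closes the argument.
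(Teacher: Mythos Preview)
Your proposal is correct and follows essentially the same route as the paper. The paper also derives \eqref{eq:4.3}--\eqref{eq:4.5} from Lemma~\ref{lem:4.2}, the weak* convergence of $\tau_{-y_n}a$, and local $L^p$-compactness, and then handles the *-vanishing of $\{DI_a(v_n)\}_n$ by the same two-case split on whether the composite translation (your $\zeta_n = z_n-y_n$, the paper's $x_n+y_n$) stays bounded or diverges; the only cosmetic difference is that the paper phrases everything via $DI_{\tau_{x_n}a}(\tau_{x_n}v_n)[v]$ and invokes the derivative part of Lemma~\ref{lem:4.2} directly, whereas you pass through the $L^{p/(p-1)}$ Brezis--Lieb splitting of $F'$ to reach the same decomposition.
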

\begin{proof}
From the assumption that $\tau_{-y_n}a
  \rightharpoonup^* a^*$ we deduce that $I_{a^*}(u_0) =
  I_{\tau_{-y_n}}(u_0)+o(1)$.  Combining with Lemma \ref{lem:4.2} we
  get \eqref{eq:4.3}. Equation \eqref{eq:4.4} follows from the
  splitting properties of the squared norm. We prove now
  \eqref{eq:4.5}.

  Fix any $v \in L^{s,2}(\mathbb{R}^N)$. We have that
  $\lim_{n \to +\infty} F'(\tau_{-y_n}u_n)v = F'(u_0)v$ in
  $L^1(\mathbb{R}^N)$ due to the fact that $\tau_{-y_n} u_n \to u_0$
  strongly in $L_{\mathrm{loc}}^p(\mathbb{R}^N)$ (see again
  \cite{DiNezza}). Therefore
  \begin{multline*}
    DI_{a^*}(u_0)[v] = \langle u_0 \mid v \rangle -
    \int_{\mathbb{R}^N} \tau_{-y_n} a \, F'(u_0)v+ o(1) \\
    = \langle \tau_{-y_n}u_n \mid v \rangle - \int_{\mathbb{R}^N}
    \tau_{-y_n} a \, F'(\tau_{-y_n}u_n)v + o(1) \\
    = DI_{\tau_{-y_n}a}(\tau_{-y_n}u_n)[v]+o(1) = o(1),
  \end{multline*}
  where we have used the assumption that $\{DI_{a}(u_n)\}_n$
  *-vanishes. This completes the proof of \eqref{eq:4.5}.

  To conclude the proof, we suppose that $\{x_n\}_n$ is a sequence of
  points from $\mathbb{R}^N$ and that $v \in
  L^{s,2}(\mathbb{R}^N)$. We distinguish two cases.
  \begin{itemize}
  	\item[(i)] Up to a subsequence, $\lim_{n \to +\infty} |x_n+y_n| = +\infty$. This implies that $\tau_{-x_n-y_n} v \rightharpoonup 0$ weakly in $L^{s,2}(\mathbb{R}^N)$, and thus $F'(u_0) \tau_{-x_n-y_n} v \to 0$ strongly in $L^1(\mathbb{R}^N)$. This yields
  	\begin{equation} \label{eq:4.6}
  	D I_{\tau_{-y_n}a} (u_0) [\tau_{-x_n-y_n} v] = o(1).
  	\end{equation}
  	Equation \eqref{eq:4.6}, Lemma \ref{lem:4.2} and the fact that $\{DI_a(v_n)\}_n$ *-vanishes, we obtain
  	\begin{align*}
  	D I_{\tau_{x_n}a}(\tau_{x_n} v_n)[v] &= D I_{\tau_{-y_n}a} (\tau_{-y_n} v_n)[\tau_{-x_n-y_n}v] \\
  	&= D I_{\tau_{-y_n}a} (\tau_{-y_n} u_n) [\tau_{-x_n-y_n}v] - D I_{\tau_{-y_n}a}(u_0)[\tau_{-x_n-y_n}v]+o(1) \\
  	&= D I_{\tau_{-y_n}a} (\tau_{-y_n}u_n)[\tau_{-x_n-y_n} v]+o(1)\\
  	&= D I_{\tau_{x_n}a} (\tau_{x_n} u_n)[v]+o(1) \\
  	&=o(1).
  	\end{align*}
  	Since the limit is independent of the subsequence, this shows that $\{D I_a (v_n)\}_n$ *-vanishes in this case.
  	\item[(ii)] Up to a subsequence, $\lim_{n \to +\infty} \left( x_n+y_n \right) = -\xi \in \mathbb{R}^N$. In this case,
  	\begin{align*}
  	D I_{\tau_{x_n}a}(\tau_{x_n} v_n)[v] &= D I_{\tau_{-y_n}a} (\tau_{-y_n} v_n)[\tau_\xi v] + o(1) \\
  	&= D I_{\tau_{-y_n}a} (\tau_{-y_n} u_n)[\tau_{\xi}] - D I_{\tau_{-y_n}a}(u_0)[\tau_{\xi} v]+o(1) \\
  	&= -D I_{\tau_{-y_n}a}(u_0)[\tau_{\xi} v]+o(1) \\
  	&=-D I_{a^*}(u_0)[\tau_{\xi}v]+o(1) \\
  	&=o(1),
  	\end{align*}
  	and we conclude as before.
  \end{itemize}
\end{proof}
\begin{proposition} \label{prop:4.8}
Let $\{u_n\}_n$ be a Palais-Smale sequence for $I_a$ at level $c \in \mathbb{R}$. One of the following alternatives must hold:
\begin{itemize}
\item[(a)] $\lim_{n \to +\infty} u_n=0$ strongly in $L^{s,2}(\mathbb{R}^N)$;
\item[(b)] after passing to a subsequence, there exist a positive integer $k$, $k$ sequences $\{y_n^i\}_n \subset \mathbb{R}^N$, $k$ functions $a^i \in L^\infty(\mathbb{R}^N)$, and $k$ functions $u^i \in L^{s,2}(\mathbb{R}^N) \setminus \{0\}$ for $i=1,\ldots,k$ such that $DI_{a^i}(u^i)=0$ for every $i=1,\ldots,k$ and such that the following hold true:
\begin{equation} \label{eq:4.7}
\lim_{n \to +\infty} \left\Vert u_n-\sum_{i=1}^k \tau_{y_n^i} u^i \right\Vert_{L^p} =0,
\end{equation}
\begin{equation}\label{eq:4.8}
c \geq \sum_{i=1}^k I_{a^i}(u^i),
\end{equation}
\begin{equation}
\lim_{n \to +\infty} \tau_{-y_n^i}a = a^i \quad\hbox{in the weak* topology},
\end{equation}
and
\begin{equation}
\lim_{n \to +\infty} \left| y_n^i - y_n^j \right| = +\infty \quad\hbox{if $i \neq j$}.
\end{equation}
\end{itemize}
\end{proposition}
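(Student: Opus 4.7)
The plan is to follow the standard profile-decomposition / concentration-compactness scheme for translation-invariant variational problems, adapted to the irregular-potential setting of this paper: rather than having $\tau_{-y_n}a$ converge to a single limiting profile $a_\infty$, we use the weak* compactness of $\mathscr{P}$ to extract a \emph{different} limiting potential $a^i$ at each bubble, and Lemma \ref{lem:4.7} supplies exactly the splitting the iteration needs.

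First I would establish that $\{u_n\}_n$ is bounded in $L^{s,2}(\mathbb{R}^N)$ by the usual identity
\[
c + o(1) + o(\|u_n\|_{L^{s,2}}) \;=\; I_a(u_n) - \tfrac{1}{p} DI_a(u_n)[u_n] \;=\; \bigl(\tfrac{1}{2}-\tfrac{1}{p}\bigr) \|u_n\|_{L^{s,2}}^2,
\]
and observe that a Palais--Smale sequence trivially satisfies the *-vanishing hypothesis of Lemma \ref{lem:4.7}, because $\nabla I_a(u_n) \to 0$ strongly. I then branch on whether $\{u_n\}_n$ itself vanishes in the sense of Definition \ref{def:4.3}. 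In the vanishing case, Remark \ref{rem:4.4} gives $u_n \to 0$ in $L^p(\mathbb{R}^N)$, hence $\int_{\mathbb{R}^N} a|u_n|^p \to 0$; combined with $DI_a(u_n)[u_n] = o(1)$ this forces $\|u_n\|_{L^{s,2}} \to 0$, which is alternative (a).

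Otherwise there exist $\{y_n^1\}_n$ and $u^1 \neq 0$ with, up to a subsequence, $\tau_{-y_n^1}u_n \weakto u^1$; using metrizability and weak* compactness of $\mathscr{P}$ I pass to a further subsequence along which $\tau_{-y_n^1}a \rightharpoonup^* a^1$. Setting $v_n^1 := u_n - \tau_{y_n^1}u^1$, Lemma \ref{lem:4.7} simultaneously delivers $DI_{a^1}(u^1)=0$, the splittings of $I_a$ and of $\|\cdot\|_{L^{s,2}}^2$, and crucially the *-vanishing of $\{DI_a(v_n^1)\}_n$, which is exactly what is required to iterate. If $\{v_n^1\}_n$ vanishes I stop with $k=1$ (Remark \ref{rem:4.4} converts vanishing into $L^p$-convergence, giving \eqref{eq:4.7}); otherwise I repeat the extraction to produce $(y_n^2, u^2, a^2)$ and $v_n^2$, and continue.

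The asymptotic separation $|y_n^i - y_n^j| \to \infty$ is forced at each step: if, say, $y_n^2 - y_n^1 \to z$ along a subsequence, then
\[
\tau_{-y_n^2} v_n^1 \;=\; \tau_{y_n^1-y_n^2}\bigl(\tau_{-y_n^1}u_n\bigr) - \tau_{y_n^1-y_n^2}u^1 \;\weakto\; \tau_{-z}u^1 - \tau_{-z}u^1 \;=\; 0,
\]
contradicting $u^2 \neq 0$. Termination in finitely many steps follows from the iterated norm identity $\sum_{i=1}^k \|u^i\|_{L^{s,2}}^2 \leq \limsup_n \|u_n\|_{L^{s,2}}^2 < +\infty$ together with the uniform lower bound $\|u^i\|_{L^{s,2}} \geq (S_p |a|_\infty^{1/(p-2)})^{-1}$, which follows from $DI_{a^i}(u^i)=0$ and $|a^i|_\infty \leq |a|_\infty$ exactly as in the derivation of \eqref{eq:3.3}. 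The energy estimate \eqref{eq:4.8} is then obtained by summing the energy splittings and using that, since $v_n^k \to 0$ in $L^p(\mathbb{R}^N)$, one has $\liminf_n I_a(v_n^k) = \liminf_n \tfrac{1}{2}\|v_n^k\|_{L^{s,2}}^2 \geq 0$. The main obstacle I expect is the bookkeeping of this induction: verifying at each iteration that the weak* limit $a^i$ can indeed be taken as the limit of translates of the \emph{original} $a$ (not of some residual built from previous steps), and that the mutual separation $|y_n^i - y_n^j| \to \infty$ propagates to \emph{all} pairs and not only consecutive ones.
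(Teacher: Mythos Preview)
Your proposal is correct and follows essentially the same approach as the paper's own proof: boundedness of the Palais--Smale sequence, the vanishing/non-vanishing dichotomy via Remark~\ref{rem:4.4}, iterated extraction of bubbles using weak* compactness of $\mathscr{P}$ together with Lemma~\ref{lem:4.7}, termination via the uniform lower bound from~\eqref{eq:3.3}, and the energy inequality from the nonnegativity of the residual energy. The bookkeeping concerns you flag are not genuine obstacles---at every step one translates the original $a$, and the pairwise separation propagates because each residual $v_n^j$ satisfies $\tau_{-y_n^i}v_n^j \rightharpoonup 0$ for all $i \leq j$ by construction.
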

\begin{proof}
It follows from the assumptions that the sequence $\{u_n\}_n$ is bounded in $L^{s,2}(\mathbb{R}^N)$ and $\{DI_a(u_n)\}_n$ *-vanishes. We distinguish two cases. 

If $\{u_n\}_n$ vanishes, then by Remark \ref{rem:4.4} $\{u_n\}_n$ converges strongly to zero in $L^p(\mathbb{R}^N)$. Recalling that $DI_a(u_n)[u_n]=o(1)$, we conclude that $\{u_n\}_n$ converges to zero strongly in $L^{s,2}(\mathbb{R}^N)$.

If, on the contrary, $\{u_n\}_n$ does not vanish, then there exist a function $u^1 \in L^{s,2}(\mathbb{R}^N)$ and a sequence $\{y^1_n\}_n \subset \mathbb{R}^N$ such that, after passing to a subsequence, and writing $u^1_n = u_n$, we have $\tau_{-y^1_n}u^1_n \rightharpoonup u^1$ weakly. Recalling that $\mathscr{P}$ is compact, we may also assume that $\{\tau_{-y^1_n} a\}_n$ weakly* converges to $a^1 \in L^\infty(\mathbb{R}^N)$. We then define $u^2_n = u^1_n-\tau_{y^1_n} u^1$, so that $\tau_{-y^1_n} u_n^2 \rightharpoonup 0$ weakly. 

Lemma \ref{lem:4.7} ensures that 
\begin{gather*}
\lim_{n \to +\infty} I_a(u^1_n)-I_a(u^2_n) = I_{a^1} (u^1),\\
\lim_{n \to +\infty} \left\| u^1_n \right\|^2_{L^{s,2}} - \left\| u^2_n \right\|^2_{L^{s,2}} =0, \\
DI_{a^1} (u^1) =0
\end{gather*}
and $\{DI_a (u^2_n)\}_n$ *-vanishes. If $\{u^2_n\}_n$ vanishes, then it converges to zero in $L^p(\mathbb{R}^N)$ and thus also $\{u^1_n-\tau_{y^1_n} u^1\}_n$ converges to zero in $L^p(\mathbb{R}^N)$. Otherwise there exist $a^2 \in L^\infty(\mathbb{R}^N)$, $u^2 \in L^{s,2}(\mathbb{R}^N) \setminus \{0\}$ and a sequence $\{y^2_n\}_n \subset \mathbb{R}^N$ such that, up to a subsequence, $\lim_{n \to +\infty} \tau_{-y^2_n} a = a^2$ weakly* and $\lim_{n \to +\infty} \tau_{-y^2_n} u^2_n =u^2$ weakly. Necessarily, $\lim_{n \to +\infty} |y^1_n - y^2_n |=0$, since $\lim_{n \to +\infty} \tau_{-y^1_n} u^2_n=0$ weakly.

\medskip

Iterating this construction, we obtain sequences $\{y^1_n\}_n \subset \mathbb{R}^N$, functions $a^i \in L^\infty(\mathbb{R}^N)$ and functions $u^i \in L^{s,2}(\mathbb{R}^N) \setminus \{0\}$ for $i=1,2,3,\ldots$ Since each $u^i$ is a non-trivial critical point of $I_{a^i}$, we have that $(a^i)^{+} \neq 0$. On the other hand, $|(a^i)^{+}|_\infty \leq |a|_\infty$. Hence $u^i \in \mathscr{N}_{a^i}$ for every $i$ and by \eqref{eq:3.3} there exists a constant $C>0$, independent of $i$, such that $\|u^i\|_{L^{s,2}} \geq C$. For every $j$ we also have
\begin{equation*}
0 \leq \|u_{n}^{j+1}\|_{L^{s,2}}^2 = \|u_n\|^2_{L^{s,2}} - \sum_{i=1}^j \|u^i\|_{L^{s,2}}^2 + o(1),
\end{equation*}
which implies that the iteration must stop after finitely many steps. Therefore there exists a positive integer $k$ such that $\{u_n^{k+1}\}_n$ vanishes, $\{u_n^{k+1}\}_n$ converges to zero strongly in $L^p(\mathbb{R}^N)$ and \eqref{eq:4.7} holds true. Similarly,
\begin{equation*}
-\int_{\mathbb{R}^N} a \left| u_n^{k+1} \right|^p \leq I_a(u_n^{k+1}) = I_a(u_n) - \sum_{i=1}^k I_{a^i} (u^i) + o(1),
\end{equation*}
and also \eqref{eq:4.8} follows from $c = \lim_{n \to +\infty} I_a(u_n)$. The proof is complete.
\end{proof}

\section{Existence of a ground state}

The proof of the following comparison lemma is probably known, but we reproduce here for the reader's convenience.
\begin{lemma} \label{lem:5.1}
	Suppose that $a_1$, $a_2 \in L^\infty(\mathbb{R}^N)$. If $a_1 \geq a_2$, then $c_{a_1} \leq c_{a_2}$. If, in addition, $a_1 \neq a_2$ and $I_{a_2}$ possesses a ground state, then $c_{a_1}<c_{a_2}$. 
\end{lemma}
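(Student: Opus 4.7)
The plan is to exploit the explicit form of $c_a$ that drops out of the fibering-map analysis in Section 3. For any $u \in \mathscr{B}_a^{+}$ the unique positive critical point of $t \mapsto I_a(tu)$ is
\begin{equation*}
\bar t(u) = \left(\frac{\|u\|_{L^{s,2}}^2}{\int_{\mathbb{R}^N} a|u|^p}\right)^{1/(p-2)},
\end{equation*}
and a direct substitution using the identity $\bar t(u)^{p-2}\int a|u|^p = \|u\|_{L^{s,2}}^2$ gives
\begin{equation*}
I_a(\bar t(u) u) = \left(\frac{1}{2}-\frac{1}{p}\right)\frac{\|u\|_{L^{s,2}}^{2p/(p-2)}}{\bigl(\int_{\mathbb{R}^N} a|u|^p\bigr)^{2/(p-2)}}.
\end{equation*}
Combining this with the homeomorphism $\mathscr{S}_a^{+} \to \mathscr{N}_a$ established in Lemma~3.3 identifies
\begin{equation*}
c_a = \left(\frac{1}{2}-\frac{1}{p}\right)\inf_{u \in \mathscr{B}_a^{+}}\frac{\|u\|_{L^{s,2}}^{2p/(p-2)}}{\bigl(\int_{\mathbb{R}^N} a|u|^p\bigr)^{2/(p-2)}}.
\end{equation*}

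For the non-strict inequality I would first dispose of the trivial case $a_2^{+} \equiv 0$: then $\mathscr{N}_{a_2} = \emptyset$, $c_{a_2} = +\infty$ and there is nothing to prove. Otherwise $a_1 \geq a_2$ guarantees $\mathscr{B}_{a_2}^{+} \subset \mathscr{B}_{a_1}^{+}$ and, on this common subset, $\int a_1|u|^p \geq \int a_2|u|^p > 0$. The infimum defining $c_{a_1}$ is therefore taken over a larger set with a (weakly) smaller integrand, so $c_{a_1} \leq c_{a_2}$ follows at once.

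For the strict inequality, let $u_2 \in \mathscr{N}_{a_2}$ be a ground state with $I_{a_2}(u_2) = c_{a_2}$, which by Proposition~3.6 we may assume is strictly positive a.e., so that $|u_2|^p > 0$ a.e. The assumption $a_1 \geq a_2$ together with $a_1 \neq a_2$ forces $\{a_1 > a_2\}$ to have positive Lebesgue measure, whence
\begin{equation*}
\int_{\mathbb{R}^N} a_1|u_2|^p \;>\; \int_{\mathbb{R}^N} a_2|u_2|^p.
\end{equation*}
Plugging $u_2$ into the variational formula for $c_{a_1}$ then yields
\begin{equation*}
c_{a_1} \leq \left(\frac{1}{2}-\frac{1}{p}\right)\frac{\|u_2\|_{L^{s,2}}^{2p/(p-2)}}{\bigl(\int a_1|u_2|^p\bigr)^{2/(p-2)}} < \left(\frac{1}{2}-\frac{1}{p}\right)\frac{\|u_2\|_{L^{s,2}}^{2p/(p-2)}}{\bigl(\int a_2|u_2|^p\bigr)^{2/(p-2)}} = I_{a_2}(u_2) = c_{a_2}.
\end{equation*}

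The only delicate ingredient is the strict positivity of $u_2$ supplied by Proposition~3.6; without it one could not rule out that $u_2$ vanishes on $\{a_1 > a_2\}$ and the strict inequality could fail. Everything else is bookkeeping on the explicit ratio representing $c_a$, and the argument does not use any information about the absence of an asymptotic profile of $a$ at infinity, which is consistent with the comparison character of the statement.
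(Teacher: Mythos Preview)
Your argument is correct and follows essentially the same approach as the paper: both dispose of the case $a_2^{+}\equiv 0$, then use the fibering structure of the Nehari manifold together with the strict positivity of the ground state (Proposition~3.6) to upgrade $\int a_1|u|^p \geq \int a_2|u|^p$ to a strict inequality. The only cosmetic difference is that the paper constructs the rescaling factor $t=\bigl(\int a_2|u|^p/\int a_1|u|^p\bigr)^{1/(p-2)}\leq 1$ explicitly and compares $I_{a_2}(u)=(\tfrac12-\tfrac1p)\|u\|^2$ with $I_{a_1}(tu)=(\tfrac12-\tfrac1p)\|tu\|^2$, whereas you package the same computation into the closed-form ratio for $c_a$; the two are algebraically equivalent.
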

\begin{proof}
	Without loss of generality, we assume that $a_2^{+} = \max\{a_2,0\}$ is not identically equal to zero, otherwise there is nothing to prove. If $u \in \mathscr{N}_{a_2}$, then
	\begin{equation*}
	\int_{\mathbb{R}^N} a_1 |u|^p \geq \int_{\mathbb{R}^N} a_2 |u|^p >0.
	\end{equation*}
	We can therefore define
	\begin{equation} \label{eq:5.1}
	t = \left(
		\frac{\int_{\mathbb{R}^N} a_2 |u|^p}{\int_{\mathbb{R}^N} a_1 |u|^p}
	\right)^{1/(p-2)} \leq 1.
	\end{equation}
	Then we have
	\begin{equation*}
	DI_{a_1}(tu)[tu] = t^2 \left( \|u\|_{L^{s,2}}^2 - t^{p-2} \int_{\mathbb{R}^N} a_1 |u|^p \right) = t^2 D I_{a_2}(u)[u]=0,
	\end{equation*}
	and hence $tu \in \mathscr{N}_{a_1}$. Since
	\begin{equation*}
	I_{a_2}(u) = \frac{1}{2} \|u\|_{L^{s,2}}^2 - \frac{1}{p} \int_{\mathbb{R}^N} a_2 |u|^p = \left( \frac{1}{2} - \frac{1}{p} \right) \|u\|_{L^{s,2}}^2 \geq \left( \frac{1}{2} - \frac{1}{p} \right) \|tu\|_{L^{s,2}}^2 = J_{a_1}(u) \geq c_{a_1},
	\end{equation*}
	we conclude that $c_{a_2} = \inf_{u \in \mathscr{N}_{a_2}}I_{a_2}(u) \geq c_{a_1}$. Furthermore, if $a_1 \neq a_2$ (as elements of $L^\infty(\mathbb{R}^N)$) and $u$ is a ground state of $I_{a_2}$, then $|u|>0$. In \eqref{eq:5.1} we then have $t<1$, and it follows that $c_{a_2}=I_{a_2}(u) > I_{a_1}(tu) \geq c_{a_1}$.
\end{proof}
%
%Fix now two numbers \(b_1>0\) and \(b_2>0\). We identify them to the corresponding constant functions in \(L^\infty(\mathbb{R}^N)\). We compute  for \(\lambda^{p-2}=b_1/b_2\):
%\begin{align*}
%	I_{b_2}(\lambda u) &= \frac12 \left\| \lambda u \right\|_{L^{s,2}}^2 - \frac1p \int_{\mathbb{R}^N} b_2 | \lambda u |^p = \frac{\lambda^2}{2} \left\| \lambda u \right\|_{L^{s,2}}^2 - \frac{b_2}{p}\lambda^p \int_{\mathbb{R}^N} |u|^p \\
%	&= \lambda^2 \left( \frac12 \left\| u \right\|_{L^{s,2}}^2 - \frac{1}{p} \int_{\mathbb{R}^N} b_1 |u|^p
%	\right) = \lambda^2 I_{b_1}(u).
%\end{align*}
%This proves the following result.
%\begin{lemma}
%Let \(b_1>0\) and \(b_2>0\). A function \(u \in L^{s,2}(\mathbb{R}^N)\) is a ground state of \(I_{b_1}\) if and only if \((b_1/b_2)^{1/(p-2)}u\) is a ground state of \(I_{b_2}\), and in this case
%\begin{align*}
%I_{b_2} \left( \left( \frac{b_1}{b_2} \right)^{\frac{1}{p-2}} u \right) = \left( \frac{b_1}{b_2} \right)^{\frac{2}{p-2}} I_{b_1}(u).
%\end{align*}
%\end{lemma}
%
Recall the definition \eqref{eq:a-bar} of \(\bar{a}\). We have
\begin{proposition} \label{prop:5.3}
There results
\begin{align*}
c_{a} < c_{\bar{a}}.
\end{align*}
\end{proposition}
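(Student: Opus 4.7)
The plan is to split the argument into two regimes depending on the sign of $\bar{a}$, since the critical level $c_{\bar{a}}$ behaves very differently in each.

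\textbf{Case 1: $\bar{a} \leq 0$} (including the convention $\bar{a} = -\infty$ when $\mathscr{B} = \emptyset$). For any nonzero $u \in L^{s,2}(\mathbb{R}^N)$ one has $\|u\|_{L^{s,2}}^2 > 0 \geq \int_{\mathbb{R}^N} \bar{a}\,|u|^p$, so $DI_{\bar{a}}(u)[u] > 0$ and $\mathscr{N}_{\bar{a}} = \emptyset$; by convention $c_{\bar{a}} = +\infty$. On the other hand, the hypothesis $a^+ \not\equiv 0$ combined with Lemma~\ref{lem:3.1} and a standard mollification supplies some $\varphi \in C_c^\infty(\mathbb{R}^N)$ with $\int_{\mathbb{R}^N} a\,|\varphi|^p > 0$. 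Thus $\varphi \in \mathscr{B}_a^+$; the fibering analysis used to build $\mathscr{N}_a$ then produces $\bar{t}(\varphi) > 0$ with $\bar{t}(\varphi)\varphi \in \mathscr{N}_a$, and hence $c_a \leq I_a(\bar{t}(\varphi)\varphi) < +\infty = c_{\bar{a}}$.

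\textbf{Case 2: $\bar{a} > 0$.} Then assumption \textbf{(A)} forces sub-case~(ii), so $a \geq \bar{a}$ almost everywhere, and I plan to apply the strict half of Lemma~\ref{lem:5.1} with $a_1 = a$ and $a_2 = \bar{a}$ regarded as a constant function. Two ingredients are needed. First, $a \neq \bar{a}$ as elements of $L^\infty(\mathbb{R}^N)$: if $a$ coincided a.e.\ with the constant $\bar{a}$, every translate $\tau_y a$ would equal $a$, so $\mathscr{A}$ would be a singleton, $\mathscr{B}$ would be empty, and $\bar{a}$ would be $-\infty$, contradicting $\bar{a} > 0$. Second, the autonomous problem $(I-\Delta)^s u = \bar{a}\,|u|^{p-2}u$ must admit a ground state; for $\bar{a} > 0$ this is classical, via mountain-pass geometry on $L^{s,2}(\mathbb{R}^N)$ together with Schwarz symmetrization (which preserves the $L^p$ norm and does not increase the Bessel norm) and the local compactness of the embedding $L^{s,2}(\mathbb{R}^N) \hookrightarrow L^p(\mathbb{R}^N)$ restricted to radial functions; see \cite{FelmerVergara,Secchi17}. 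With these two facts in hand, Lemma~\ref{lem:5.1} delivers $c_a < c_{\bar{a}}$.

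The main obstacle is justifying the existence of a ground state for the autonomous problem in Case~2, since without it Lemma~\ref{lem:5.1} only yields the weak inequality $c_a \leq c_{\bar{a}}$. Once this is either quoted from the earlier Bessel literature or established by the rearrangement/Nehari minimization sketched above, the proposition reduces to a clean case split plus a direct application of the comparison lemma, the trivial case $\bar{a} \leq 0$ being automatic from the emptiness of $\mathscr{N}_{\bar{a}}$.
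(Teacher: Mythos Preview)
Your proposal is correct and follows essentially the same approach as the paper: the same case split on the sign of $\bar a$, the same observation that $\bar a\le 0$ forces $\mathscr{N}_{\bar a}=\emptyset$ and $c_{\bar a}=+\infty$, and in the case $\bar a>0$ the same contradiction argument (if $a\equiv\bar a$ then $\mathscr{A}$ is a singleton, $\mathscr{B}=\emptyset$, $\bar a=-\infty$) combined with Lemma~\ref{lem:5.1} and the existence of a ground state for the autonomous constant-coefficient problem. The only cosmetic difference is that the paper cites \cite{Ambrosio} for the autonomous ground state, whereas you sketch a symmetrization/Nehari argument and point to \cite{FelmerVergara,Secchi17}; either route is acceptable.
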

\begin{proof}
We first consider (i) of assumption \textbf{(A)}. Since \(\bar{a}\leq 0\), we have \(c_{\bar{a}}=\infty\). But \(c_{a} \in \mathbb{R}\) because \(a^{+} \neq 0\), and there is nothing more to prove. We can assume that \(\bar{a}>0\) in the rest of the proof. If (ii) of assumption \textbf{(A)} holds, recalling that \(\bar{a}>-\infty\) entails \(\mathscr{B}\neq \emptyset\) we can conclude that \(a \neq \bar{a}\). Now Lemma \ref{lem:5.1} implies that \(c_a < c_{\bar{a}}\), since \(I_{\bar{a}}\) has a ground state by the arguments of \cite[Theorem 1.1]{Ambrosio}.
\end{proof}
We are now ready to prove our main existence result.
\begin{proof}[Proof of Theorem \ref{th:main}]
We have \(\mathscr{N}_{a} \neq \emptyset\) and \(c_{a}<\infty\) because \(a^{+} \neq 0\). From \eqref{eq:3.3} we get \(c_{a}>0\). An application of Ekeland's Principle yields in a standard way a mimnimizing sequence \(\{u_n\}_n \subset \mathscr{N}_{a}\) for the functional \(\bar{I}_{a}\) defined as the restriction of \(I_{a}\) to \(\mathscr{N}_{a}\). This sequence is also a (PS)-sequence for \(\bar{I}_{a}\) at the level \(c_{a}\). By Proposition \ref{prop:3.7} \(\{u_n\}_n\) is a (PS)-sequence for \(I_{a}\) at the level \(c_{a}\). The strong convergence of \(\{u_n\}_n\) to zero is easily ruled out, since \(I_{a}(u_n) \to c_{a}>0\). Proposition \ref{prop:4.8} yields then a number \(k \in \mathbb{N}\), functions \(a^i \in \overline{\mathscr{A}}\) and non-trivial critical points \(u^i\) of \(I_{a^i}\) such that 
\begin{align*}
	c_{a} \geq \sum_{i=1}^k I_{a^i}(u^i).
\end{align*}
From the knowledge that each \(u^i\) is a non-trivial critical point of \(I_{a^i}\) we deduce \((a^i)^{+}\neq 0\) for every \(i=1,\ldots,k\). Again by \eqref{eq:3.3} we get \(I_{a^i}(u^i)>0\) for every \(i=1,\ldots,k\).

Suppose that for \emph{some} index \(i\) there results \(a^i \in \mathscr{B}\). Then \(a^i \leq \bar{a}\), and Lemma \ref{lem:5.1} together with Proposition \ref{prop:5.3} yield 
\(I_{a^i}(u^i) \geq c_{a^i} \geq c_{\bar{a}}>c_{a}\). This is a contrdiction. Therefore each \(a^i\) is a translation of \(a\), and \(I_{a^i}(u^i) \geq c_{a}\) for every \(i=1,\ldots,k\). This forces \(k=1\), and a translation of \(u^1\) is a ground state of \(I_a\).
\end{proof}

\section{An example}

Assumption \textbf{(A)} can be rephrased in a more familiar way for continuous bounded potentials.
\begin{proposition}
For any $a \in L^\infty(\mathbb{R}^N)$, define
\begin{align*}
	\hat{a} = \lim_{R \to +\infty} \operatorname*{ess\, sup}_{x \in \mathbb{R}^N \setminus B(0,R)} a(x).
\end{align*}
If \textbf{(A)} holds true with $\bar{a}$ replaced by $\hat{a}$, then \textbf{(A)} holds true with $\bar{a}$.
\end{proposition}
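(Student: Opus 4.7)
The plan is to reduce the proposition to the single inequality $\bar{a}\leq \hat{a}$. Once this is established, both alternatives of assumption \textbf{(A)} transfer from $\hat{a}$ to $\bar{a}$ automatically: in case (i), $\hat{a}\leq 0$ gives $\bar{a}\leq 0$; in case (ii), $\hat{a}\leq a$ gives $\bar{a}\leq a$.

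To prove $\bar{a}\leq \hat{a}$ (the case $\mathscr{B}=\emptyset$ being trivial since then $\bar{a}=-\infty$), I would fix an arbitrary $u\in\mathscr{B}$ and, by the metrizability of $\mathscr{P}$, pick a sequence $\{y_n\}_n\subset\mathbb{R}^N$ with $\tau_{y_n}a\rightharpoonup^{*}u$. The first sub-step is to check that $|y_n|\to+\infty$. If instead $y_n\to y_0\in\mathbb{R}^N$ along a subsequence, then for every $\varphi\in L^1(\mathbb{R}^N)$ the identity
\begin{equation*}
\int_{\mathbb{R}^N}\tau_{y_n}a\cdot\varphi \;=\; \int_{\mathbb{R}^N} a(z)\,\varphi(z+y_n)\,dz,
\end{equation*}
combined with the continuity of translation in $L^1(\mathbb{R}^N)$, would yield $\tau_{y_n}a\rightharpoonup^{*}\tau_{y_0}a$, forcing $u=\tau_{y_0}a\in\mathscr{A}$ and contradicting $u\in\mathscr{B}$.

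The second sub-step is then to bound $\operatorname{ess\, sup}u$ by $\hat{a}$ through the duality trick of Lemma \ref{lem:3.1}. For any $c<\operatorname{ess\, sup}u$ I would choose $\delta>0$ and $R_0>0$ so that the set $E=\{x\in B(0,R_0)\mid u(x)\geq c+\delta\}$ has positive (finite) measure, and set $\varphi=\chi_E/\mathcal{L}^N(E)\in L^1(\mathbb{R}^N)$. Then $\int u\,\varphi\geq c+\delta$, while $\varphi(\cdot+y_n)$ is supported in $B(-y_n,R_0)$, hence contained in $\mathbb{R}^N\setminus B(0,R)$ as soon as $|y_n|>R+R_0$, so that
\begin{equation*}
\int_{\mathbb{R}^N}\tau_{y_n}a\cdot\varphi \;\leq\; \operatorname*{ess\, sup}_{\mathbb{R}^N\setminus B(0,R)}a.
\end{equation*}
Passing to the limit $n\to+\infty$ and then $R\to+\infty$ gives $c+\delta\leq\hat{a}$; since $c<\operatorname{ess\, sup}u$ was arbitrary, $\operatorname{ess\, sup}u\leq\hat{a}$, and taking the supremum over $u\in\mathscr{B}$ closes the argument.

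The only (rather mild) obstacle is the first sub-step: one has to rule out the possibility that $u\in\mathscr{B}$ arises as a weak* limit of $\tau_{y_n}a$ along a bounded sequence $\{y_n\}_n$. The resolution hinges on the fact that, although translation is not norm-continuous in $L^\infty(\mathbb{R}^N)$, its precomposition action on $L^1(\mathbb{R}^N)$ is continuous, which is exactly what promotes $y_n\to y_0$ into $\tau_{y_n}a\rightharpoonup^{*}\tau_{y_0}a$ and therefore into membership of the limit in $\mathscr{A}$.
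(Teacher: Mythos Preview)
Your proposal is correct and follows essentially the same strategy as the paper: reduce to the inequality $\bar{a}\leq\hat{a}$, show that any approximating sequence $\{y_n\}$ for an element of $\mathscr{B}$ must escape to infinity (using continuity of translation on $L^1$), and then test against a compactly supported nonnegative $L^1$ function to compare $\operatorname{ess\,sup}u$ with the tail supremum of $a$. The only difference is cosmetic: the paper invokes Lemma~\ref{lem:3.1} to produce an $L^1$ test function and then mollifies it to obtain compact support, whereas you build the compactly supported test function directly as $\chi_E/\mathcal{L}^N(E)$, which is slightly more economical.
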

\begin{proof}
If \(\mathscr{B} = \emptyset\), then $\bar{a}=-\infty$ and \textbf{(A)} holds true. We may assume that \(\mathscr{B} \neq \emptyset\), so that \(a\) cannot be constant. Let us prove that
\begin{align}
\bar{a} \leq \hat{a}. \label{eq:6.1}
\end{align}
Pick $b \in \mathscr{B}$. There is a sequence $\{x_n\}_n \subset \mathbb{R}^N$ such that $\tau_{x_n}a \rightharpoonup^\star b$. Translations are continuous in the weak$^\star$ topology of $L^\infty(\mathbb{R}^N)$, since they are continuous in $L^1(\mathbb{R}^N)$. For the sake of contradiction, suppose that $\{x_n\}_n$ contains a bounded subsequence. Up to a further subsequence, there must exist a point $\xi\in\mathbb{R}^N$ such that $x_n \to \xi$ and $\tau_{x_n} a \rightharpoonup^\star \tau_{\xi}a$. Since $\mathscr{P}$ is metrizable, $\tau_{\xi} a = b \notin \mathscr{A}$, a contradiction. Therefore $\lim_{n \to +\infty} |x_n|=+\infty$. 

Let $\varepsilon>0$ be given, and apply Lemma \ref{lem:3.1}: there exists $\varphi \in L^1(\mathbb{R}^N)$ with $\varphi \geq 0$ and $\|\varphi\|_{L^1}=1$ such that
\begin{align*}
	\int_{\mathbb{R}^N} b \varphi \geq \operatorname{ess\, sup}b - \frac{\varepsilon}{2}.
\end{align*}
Choose $\tilde{\psi} \in C_c^\infty(\mathbb{R}^N)$ such that $\tilde{\psi} \geq 0$ and
\begin{align*}
	\left\| \varphi - \tilde{\psi} \right\|_{L^1} \leq \frac{\varepsilon}{4 \|b\|_{L^\infty}}.
\end{align*}
Now $\psi = \tilde{\psi}/\|\tilde{\psi}\|_{L^1} \in C_c^\infty(\mathbb{R}^N)$ satisfies
\begin{align*}
	\left\| \varphi - \psi \right\|_{L^1} \leq \frac{\varepsilon}{2 \|b\|_{L^\infty}},
\end{align*}
$\psi \geq 0$ and $\|\psi\|_{L^1} = 1$. This implies
\begin{align*}
	\int_{\mathbb{R}^N} b \psi = \int_{\mathbb{R}^N} b \varphi - \int_{\mathbb{R}^N} b (\varphi-\psi) \geq \int_{\mathbb{R}^N} b \varphi - \|b\|_{L^\infty} \left\|\psi - \varphi \right\|_{L^1} \geq \operatorname{ess\, sup}b - \varepsilon.
\end{align*}
Suppose that $\operatorname{supp}\psi \subset B(0,R)$: then
\begin{multline*}
\operatorname{ess\, sup}b - \varepsilon \leq \int_{\mathbb{R}^N} b \psi = \lim_{n \to +\infty} \int_{\mathbb{R}^N} (\tau_{x_n}a)\psi \\
\leq \lim_{n \to +\infty} \operatorname*{ess\, sup}_{x \in B(-x_n,R)} a(x) \int_{\mathbb{R}^N} \psi \leq \lim_{n \to +\infty} \operatorname*{ess\, sup}_{x \in \mathbb{R}^N \setminus B(0,|x_n|-R)} a(x) = \hat{a}.
\end{multline*}
Since $\varepsilon>0$ is arbitrary, we conclude that $\operatorname{ess\, sup}b \leq \hat{a}$.
If (i) of assumption \textbf{(A)} holds, then \eqref{eq:6.1} yields $\bar{a} \leq \hat{a} \leq 0$. If (ii) holds, then \eqref{eq:6.1} yields $\bar{a} \leq \hat{a} \leq a$, and the proof is complete.
\end{proof}
An immediate consequence of Theorem \ref{th:main} is then the following.
\begin{corollary}
If \(a\) is a bounded continuous function such that either \(\limsup_{|x| \to +\infty} a(x) \leq 0\) or \(\limsup_{|x| \to +\infty} a(x) \leq a\), then equation \eqref{eq:1.1} has (at least) a positive ground state as soon as $2<p<2_s^\star$.
\end{corollary}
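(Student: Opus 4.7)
The plan is to unwind the definitions, show that for continuous bounded $a$ the quantity $\hat{a}$ introduced in the preceding proposition coincides with $\limsup_{|x|\to+\infty}a(x)$, and then chain the preceding results until we can invoke Theorem~\ref{th:main}.

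First I would observe that when $a \in C(\mathbb{R}^N) \cap L^\infty(\mathbb{R}^N)$, the essential supremum on any open set equals the ordinary supremum, and in particular
\begin{equation*}
\hat{a} = \lim_{R \to +\infty} \operatorname{ess\, sup}_{x \in \mathbb{R}^N \setminus B(0,R)} a(x) = \lim_{R \to +\infty} \sup_{|x|>R} a(x) = \limsup_{|x| \to +\infty} a(x).
\end{equation*}
The limit exists (in $[-\infty,+\infty)$) because the $\sup$ over the complements of balls is non-increasing in $R$, and it is bounded from above by $|a|_\infty$.

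Next I would verify that the two alternatives in the hypothesis of the corollary are exactly assumption \textbf{(A)} with $\bar{a}$ replaced by $\hat{a}$. In the first case, $\limsup_{|x|\to+\infty} a(x) \leq 0$ gives $\hat{a}\leq 0$, which is (i). In the second case, the hypothesis $\limsup_{|x| \to +\infty} a(x) \leq a$ — understood pointwise, since the left-hand side is a constant — reads $\hat{a} \leq a(x)$ for every $x$, which is (ii). Moreover $a^{+} \not\equiv 0$: otherwise $a \leq 0$ everywhere, and combined with either alternative this would force $a \equiv 0$ in case (ii) and in case (i) we would have $c_a = +\infty$; but $a^{+}\not\equiv 0$ is assumed implicitly in the setting of Theorem~\ref{th:main} so this case must be excluded (or if $a\equiv 0$, then \eqref{eq:1.1} trivializes). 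In short, \textbf{(A)} is satisfied with $\hat{a}$ in place of $\bar{a}$.

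Applying the previous Proposition, we conclude that $a$ satisfies \textbf{(A)} with $\bar{a}$ itself, and then Theorem~\ref{th:main} directly produces a positive ground state of \eqref{eq:1.1}. There is essentially no obstacle here beyond the continuity-to-essential-supremum identification; the rest is a bookkeeping chain through the two results already proved.
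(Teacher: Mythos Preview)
Your proposal is correct and follows exactly the route the paper intends: the paper presents the corollary as ``an immediate consequence'' of Theorem~\ref{th:main} via the preceding proposition, and you have simply supplied the missing bookkeeping --- identifying $\hat{a}$ with $\limsup_{|x|\to\infty}a(x)$ for continuous $a$, matching the two alternatives to (i) and (ii), and then chaining through the proposition and the main theorem. The only slightly awkward spot is your discussion of $a^{+}\not\equiv 0$, which is really a lacuna in the \emph{statement} of the corollary rather than in your argument; your observation that the equation trivializes when $a\leq 0$ is an adequate way to dispose of it.
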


\section*{Acknowledgement}

The author is member of the {\em Gruppo Nazionale per
l'Analisi Ma\-te\-ma\-ti\-ca, la Probabilit\`a e le loro Applicazioni}
(GNAMPA) of the {\em Istituto Nazionale di Alta Matematica} (INdAM).
The manuscript was realized within the auspices of the INdAM -- GNAMPA
Projects {\em Problemi non lineari alle derivate parziali}
(Prot\_U-UFMBAZ-2018-000384).


\begin{thebibliography}{99}

\bibitem{AckerChagoya} \textsc{N. Ackermann, J. Chagoya}, \emph{Ground
    states for irregular and indefinite superlinear Schr\"{o}dinger
    equations}, J. Differential Equations \textbf{261} (2016),
  5180--5201.
  
  \bibitem{Adams} \textsc{D. R. Adams, L. I. Hedberg}. Function spaces and potential
theory. Grundlehren der Mathematischen Wissenschaften \textbf{314}. Springer--Verlag, Berlin, 1996.
  
\bibitem{Ambrosio} \textsc{V. Ambrosio}, \emph{Ground states solutions for a non-linear equation involving a pseudo-relativistic Schr\"{o}dinger operator}, Journal of Mathematical Physics \textbf{57}, 051502 (2016).

\bibitem{BahriLions} \textsc{A. Bahri, P.-L. Lions}, \emph{On the existence of a positive solution of semilinear elliptic equations in unbounded domains}, Ann. Inst. H. Poincar\'{e} Anal. Non Lin\'{e}aire \textbf{14} (1997), no. 3, 365--413.

\bibitem{ClappWeth} \textsc{M. Clapp, T. Weth}, \emph{Multiple solutions of nonlinear scalar field equations}, Comm. Partial Differential Equations \textbf{29} (2004), 1533--1554.

  \bibitem{DiNezza} \textsc{E. Di Nezza, G. Palatucci, E. Valdinoci},
    \emph{Hitchhiker's guide to fractional Sobolev spaces},
    Bull. Sci. math. \textbf{136} (2012), 521--573.
    
   \bibitem{BerestyckiLionsI}  \textsc{H. Berestycki, P.-L. Lions}, \emph{Nonlinear scalar field equations. I. Existence of a ground state}. Arch. Rational Mech. Anal. 82 (1983), no. 4, 313–345.
   
   \bibitem{BerestyckiLionsII}  \textsc{H. Berestycki, P.-L. Lions}, \emph{Nonlinear scalar field equations. II. Existence of infinitely many solutions}. Arch. Rational Mech. Anal. 82 (1983), no. 4, 313–345.
   
  \bibitem{CeramiMolle} \textsc{G. Cerami, R. Molle}, \emph{On some Schr\"{o}dinger equations with non regular potential at infinity}, Discrete Contin. Dyn.
Syst. \textbf{28} (2) (2010) 827--844,
   
  \bibitem{CingolaniSecchi15} \textsc{S. Cingolani, S. Secchi}, \emph{Ground states for the pseudo-relativistic Hartree equation with external potential}. Proceedings of the Royal Society of Edinburgh Section A: Mathematics, \textbf{145} (1), 73--90.
   
\bibitem{CingolaniSecchi18}  \textsc{S. Cingolani, S. Secchi}, \emph{Intertwining solutions for magnetic relativistic Hartree type equations}. Nonlinearity, \textbf{31} (5), 2294.
   
   \bibitem{DingNi} \textsc{W. Y. Ding, W.-M. Ni}, \emph{On the existence of positive entire solutions of a semilinear elliptic equation}, {Arch. Rational Mech. Anal.} \textbf{91} (1986), 283--308.

\bibitem{Fall} \textsc{M. M. Fall, V. Felli}, \emph{Unique continuation properties for relativistic Schr\"{o}dinger operators with a singular potential},  Discrete Contin. Dyn. Syst. \textbf{35} (2015), no. 12, 5827–5867

\bibitem{Felmer} \textsc{P. Felmer, A. Quaas, J. Tan}, \emph{Positive solutions of the
nonlinear Schr\"odinger equation with the fractional Laplacian}, Proc. Roy. Soc. Edinburgh Sect. A \textbf{142} (2012), 1237--1262.

\bibitem{FelmerVergara} \textsc{P. Felmer, I. Vergara}, \emph{Scalar field equation with non-local diffusion}, Nonlinear Differ. Eq. Appl. \textbf{22} (2015), 1411--1428.

\bibitem{Rabinowitz} \textsc{P. H. Rabinowitz}, \emph{On a class of nonlinear Schr\"{o}dinger equations}, Z. Angew. Math. Phys. \textbf{43} (1992), 270--291.
 
\bibitem{Rudin} \textsc{W. Rudin}, Functional analysis, second
  edition, McGraw-Hill Inc., New York, 1991.

  \bibitem{Secchi12} \textsc{S. Secchi}, \emph{Ground state solutions
      for nonlinear fractional Schr\"{o}dinger equations in
      \(\mathbb{R}^N\)}, Journal of Mathematical Physics \textbf{54}
    031501 (2013).

\bibitem{Secchi17} \textsc{S. Secchi}, \emph{Concave-convex
    nonlinearities for some nonlinear fractional equations involving
    the Bessel operator},  Complex Variables and Elliptic Equations, \textbf{62} (2017), 654-669.
    
    \bibitem{Secchi17-1}  \textsc{S. Secchi}, \emph{On some nonlinear fractional equations involving the Bessel operator}, J. Dynam. Differential Equations \textbf{29} (2017), no. 3, 1173–1193.
    
    \bibitem{Secchi17-2} \textsc{S. Secchi}, \emph{On a generalized pseudorelativistic Schrödinger equation with supercritical growth}, arXiv:1708.03479.

\bibitem{Stein} \textsc{E. M. Stein}. Singular integrals and differentiability properties of functions. Princeton Mathematical Series, No. 30. Princeton University Press, Princeton, N.J. 1970.

\end{thebibliography}
\end{document}